\renewcommand{\cite}{\citet*} 
\newcommand{\dd}{\mathrm{d}}
\newcommand{\E}{\mathbb{E}}         
\newcommand{\EE}{\mathbb{E}}
\newcommand{\mP}{\mathbb{P}}
\newcommand{\mR}{\mathbb{R}}        
\newcommand{\mb}[1]{\mathbb{#1}}
\newcommand{\mbf}[1]{\mathbf{#1}}
\newcommand{\mc}[1]{\mathcal{#1}}
\newcommand{\pt}{\partial}
\newcommand{\nm}[1]{\left\| #1 \right\|} 
\theoremstyle{plain}
\newtheorem{theorem}{Theorem}[section]
\newtheorem{lemma}{Lemma}[section]
\newtheorem{proposition}{Proposition}[section]
\theoremstyle{definition}
\newtheorem{definition}{Definition}[section]
\newtheorem{example}{Example}[section]
\newtheorem{assumption}{Assumption}[section]
\theoremstyle{remark}
\newtheorem{remark}{Remark}[section] 
\theoremstyle{definition}
\numberwithin{equation}{section} 
\numberwithin{equation}{section}
\begin{document}
\makeatletter
\def\@setauthors{%
\begingroup
\def\thanks{\protect\thanks@warning}%
\trivlist \centering\footnotesize \@topsep30\p@\relax
\advance\@topsep by -\baselineskip
\item\relax
\author@andify\authors
\def\\{\protect\linebreak}%
{\authors}%
\ifx\@empty\contribs \else ,\penalty-3 \space \@setcontribs
\@closetoccontribs \fi
\endtrivlist
\endgroup } \makeatother
 \baselineskip 18pt
\title[{ \tiny Utility Maximization under Distributional Ambiguity}]
{{
		Robust Utility Maximization with Intractable Claims under Distributional Ambiguity: A Random Distributionally Robust Optimization Approach }} \vskip 10pt\noindent
\author[{\tiny  Guohui Guan, Zongxia Liang, Xingjian Ma}]
{\tiny {\tiny  Guohui Guan$^{a,b,\dag}$, Zongxia Liang$^{c,\ddag}$, Xingjian Ma$^{c,*}$}}

\noindent
\begin{abstract}
{~\\}
This paper studies a robust utility maximization problem for intractable claims under distributional ambiguity, where the distribution of the claim cannot be inferred from market information and its dependence with tradable assets is largely unknown. 
We extend the existing framework for intractable claims in two directions. First, we allow the marginal distribution of the claim to vary within a $\varphi$-divergence ambiguity set, capturing statistical uncertainty in its estimation. Second, we consider a general (possibly non-additive) bivariate utility function, which enables more flexible interactions between the decision and the claim beyond the classical additive specification. 
To analyze this problem, we adopt a random distributionally robust optimization (RDRO) formulation, which lifts the optimization to the space of joint distributions and provides a convenient representation of the coupling between the decision and the uncertain claim. We establish the existence of optimal decisions using tools from optimal transport and develop a Legendre-Fenchel duality framework that links the constrained and penalized formulations, leading to uniqueness results and tractable reformulations. 
Finally, we propose a numerical algorithm based on unbalanced optimal transport scaling combined with projected gradient methods, and illustrate the relationship between the parameters in the constrained and penalized formulations.
 \vskip 15 pt \noindent
Keywords: Utility maximization, Intractable claims, Random distributionally robust optimization, Optimal transport, Duality theory, $\varphi$-divergence
\vskip 5pt  \noindent
\vskip 5pt  \noindent
MSCcodes: 90C15, 90C46, 90C47, 91G10
\end{abstract}

\maketitle

\vskip15pt
\setcounter{equation}{0}

\section{ \bf Introduction}

The expected utility framework is a cornerstone of modern portfolio selection and asset pricing theory. In its classical form, an investor selects a terminal payoff or trading strategy so as to maximize expected utility under a reference probability measure. In such formulations, all sources of uncertainty are assumed to be fully specified through a given probabilistic model, and the distribution of the underlying risks is taken as known (see, e.g., \cite{merton1975optimum}, \cite{karatzas1998methods}).

In many applications, however, this assumption is often  unrealistic. A prominent example arises in the presence of \textbf{intractable contingent claims}, i.e., contingent claims whose distributions cannot be inferred from market information and whose dependence with tradable assets is largely unknown. Typical examples include insurance liabilities, employee compensation schemes, or other non-tradable exposures (see, e.g., \cite{hou2016robust}, \cite{li2023robust}). In such settings, the investor typically has access only to partial distributional information, most notably, an estimate of the marginal distribution of the claim, while the joint structure with financial positions remains unspecified.

Motivated by these considerations, recent works have proposed robust utility maximization models in which the dependence between the terminal wealth and the intractable claim is treated in a worst-case manner; see, for instance, \cite{hou2016robust} and \cite{li2023robust}. A key feature of these models is that the marginal distribution of the claim is assumed to be fixed. This structure allows the problem to be reformulated using quantile-based methods or rearrangement techniques, as further developed in recent work such as \cite{chen2026alpha}.

However, such approaches rely critically on the knowledge of the marginal distribution and its associated quantile representation. When the marginal distribution is itself uncertain, these techniques are no longer directly applicable, since the quantile representation depends on the underlying marginal distribution, which is no longer fixed in our setting. This motivates the need for a framework that can accommodate both dependence uncertainty and marginal ambiguity.

\subsection{Problem setup}

In this paper, we extend the above framework by incorporating distributional ambiguity at the marginal level, while preserving the worst-case treatment of dependence. From a modeling perspective, allowing the marginal distribution to vary naturally connects the problem to distributionally robust optimization (DRO), where uncertainty is described through a set of plausible probability measures (see, e.g., \cite{delage2010distributionally}, \cite{goh2010distributionally},  \cite{ben2013robust}, \cite{rahimian2022frameworks}, \cite{jiang2024data}). At the same time, the worst-case treatment of dependence leads to an optimization over couplings, which is closely related to optimal transport (see, e.g., \cite{villani2008optimal}).

More precisely, we consider the robust utility maximization problem
\[
\sup_{X \in \mc{A}} \inf_{\nu \in D_\eta} \inf_{Y \sim \nu} \mathbb{E}[U(X,Y)],
\]
where $D_\eta$ denotes an ambiguity set defined through a $\varphi$-divergence neighborhood of a nominal distribution with tolerable error $\eta$ (see, e.g., \cite{ben2013robust}, \cite{love2015phi}). Such ambiguity sets are widely used due to their tractability and their compatibility with convex duality methods. 

This formulation captures two sources of uncertainty: marginal ambiguity, through $D_\eta$, and dependence uncertainty, through the coupling between the decision $X$ and the intractable claim $Y$. Although these two sources are conceptually distinct, we express the problem using a single infimum for notational simplicity.

In addition, we allow for a general (possibly non-additive) bivariate utility function $U(X,Y)$, which extends the classical additive form $U(X+Y)$ used in the existing literature. This extension enhances modeling flexibility and allows for more general interactions between the decision and the intractable claim, which are not captured by the classical additive specification.

\subsection{Challenges and contributions}

The simultaneous presence of marginal ambiguity and dependence uncertainty leads to a substantially more challenging problem than existing models. In particular, the ambiguity is imposed on the marginal distribution, while the optimization is naturally carried out over joint distributions. This feature is not covered by standard approaches in distributionally robust optimization, where ambiguity is typically imposed on the full distribution, nor by classical optimal transport, where both marginals are fixed.

To analyze this problem, we adopt a random distributionally robust optimization (RDRO) approach, in which the decision variable is modeled as a random variable and the optimization is lifted to the space of joint distributions. This representation makes it possible to describe the coupling between the decision and the uncertain claim and to use tools from optimal transport. The RDRO formulation is used here primarily as a technical device that enables a tractable analysis of the problem under joint ambiguity.

The proposed framework raises several analytical challenges. First, the optimization is carried out over an infinite-dimensional space, and the existence of optimal solutions requires appropriate compactness and stability arguments, relying on tools from optimal transport. Second, the ambiguity constraint acts on the marginal distribution rather than on the joint law, which prevents a direct application of standard duality results in distributionally robust optimization. Third, the problem involves a nonstandard optimal transport structure in which one marginal is not fixed, requiring a joint treatment of transport and distributional optimization.

Our main contributions are summarized as follows.

\begin{enumerate}
    \item \textbf{Modeling contribution.} We extend the robust utility maximization framework for intractable claims in two directions. 
    First, we incorporate $\varphi$-divergence ambiguity at the marginal level, allowing the distribution of the intractable claim to vary within a prescribed ambiguity set. 
    Second, we allow for a general (possibly non-additive) bivariate utility function, extending the classical additive specification used in the existing literature. 
    Together, these features lead to a unified formulation that captures both marginal uncertainty and dependence uncertainty, and can be viewed as a natural extension of existing models such as \cite{li2023robust}.

    \item \textbf{Theoretical contributions.} We establish the existence of optimal solutions in an infinite-dimensional setting and derive uniqueness results under suitable conditions. We further develop a Legendre-Fenchel type duality framework, building on tools from convex analysis (see, e.g., \cite{rockafellar2015convex}, \cite{luenberger1997optimization}, \cite{hiriart2004fundamentals}), that connects the constrained formulation with a penalized counterpart. A key feature of the analysis is that the divergence constraint acts on the marginal distribution, which leads to a nonstandard dual structure. Finally, we propose a feasible numerical method based on unbalanced optimal transport (see, e.g., \cite{chizat2018scaling}) and projected gradient techniques.
\end{enumerate}

To further clarify the relation between our model and existing approaches, we briefly position our work relative to the literature. 

Compared with the robust utility maximization models for intractable claims (see, e.g., \cite{hou2016robust}, \cite{li2023robust}, \cite{chen2026alpha}), our formulation allows for ambiguity in the marginal distribution and a general class of (possibly non-additive) bivariate utility function, which prevents the use of quantile-based methods.

Compared with distributionally robust optimization (see, e.g., \cite{delage2010distributionally}, \cite{goh2010distributionally},  \cite{ben2013robust}, \cite{rahimian2022frameworks}, \cite{jiang2024data}),  where the decision is typically deterministic and the ambiguity is imposed on the distribution of the underlying uncertainty, our model characterizes the decision as a random variable, which is inherited from the expected utility framework, and imposes ambiguity only on the marginal, leading to a different structure. 

Finally, while our formulation is related to optimal transport through the coupling between the decision and the claim, it differs from classical transport problems (see, e.g., \cite{villani2008optimal}) in that one marginal is not fixed.

\subsection{Organization}

The remainder of the paper is organized as follows. Section~\ref{sec:prob_formulation} introduces the model setup and the main assumptions. Section~\ref{sec_existence} proves the existence of an optimal decision via a stability argument rooted in optimal transport theory. Section~\ref{Sec_DualityTheorem} develops a Legendre–Fenchel type duality framework, and establishes the connection between the constrained and penalized RDRO formulations, which serves as a key tool for the subsequent analysis, in particular for establishing uniqueness and enabling tractable reformulations for the numerical approach. Section~\ref{Sec_uniqueness_and_innerexistence} builds on this duality framework, further investigates the structural properties of optimal solutions and proves the uniqueness of the optimal decision.
Section~\ref{Sec_numerical} presents numerical methods and experiments. Finally, Section~\ref{Sec_conclusion} concludes the paper.

\section{{\bf Problem formulation}}\label{sec:prob_formulation}
We start from a financial market formulation to introduce the underlying robust utility maximization problem and its economic interpretation. We then lift this formulation to the general RDRO framework, which enables a systematic analysis and provides the foundation for the subsequent theoretical developments.

\subsection{Robust utility maximization problem under distributional ambiguity}\label{subsec:DRO_invest}

Consider a complete, non-atomic probability space $(\Omega, \mathcal{F}, \mathbb{P})$ and an arbitrage-free market. Assume that the market is complete and has a unique pricing kernel $\rho$, which is $\mathcal{F}$-measurable and satisfies $\mathbb{P}(\rho > 0) = 1$ and $\mathbb{E}[\rho] < \infty$.

Consider a small investor who has purchased an option  which offers a random income $Y=\xi$ at the terminal time (known as an intractable contingent claim, see, e.g., \cite{li2023robust}), she has an estimate of the distribution of the payoff $Y$, and would like to choose a robust investment strategy to maximize the expectation of her utility under the worst-case scenario.

For any initial capital $x > 0$, we may define the set of all attainable terminal wealth via the budget constraint:
\begin{equation}
\mc{A} := \left\{ X \ge 0 : \EE[\rho X] \le x \right\}.
\end{equation}

Then, we formulate the following robust utility maximization problem:
\begin{equation}\label{Primalprob}
 \sup_{X \in \mc{A}} \inf_{Y \sim \nu \in D_{\eta}} \E \left[ U(X,Y)\right],
\end{equation}
where $U(\cdot,\cdot)$ is the utility function of terminal wealth X with intractable claim Y, and Y is an exogenous random variable on $(\Omega, \mc{F},\mb{P})$ with its unknown distribution $\nu = \mP_Y =\mP \circ Y^{-1}$ taking value in $D_{\eta}$, and  $D_{\eta}$ is the ambiguity set with error $\eta$ given by Definition~\ref{Def:Deta}. Generally, we denote the value sets of variables $X$ and $Y$ as $\mc{X} := [0, \infty)$ and $\mc{Y} \subset \mR$, respectively. 

We assume that $Y$ is essentially bounded, i.e.,
\begin{equation}
\|Y\|_{\infty} := \operatorname{ess\,sup} |Y| < \infty.
\end{equation}
It then follows that $\mc{Y} \subset \mR$ is bounded, and hence a compact convex set.

The utility function $U(\cdot)$ satisfies the following assumptions:
\begin{assumption}\label{Asp_U1}
The utility function $U: \mc{X} \times \mc{Y} \to \mathbb{R}$ is continuous. Moreover, for each fixed $y \in \mc{Y}$, the function $U(\cdot,y)$ is bounded, and strictly concave in $x \in \mc{X}$.
\end{assumption}

In this paper, we characterize ambiguity through a $\varphi$-divergence–based uncertainty set (See Section~\ref{sec_preliminaries} for details).
\begin{definition}\label{Def:Deta}
For a benchmark distribution $\nu_0$, let
\begin{equation}\label{Deta}
   D_{\eta}= \{\nu \in \mathcal{P}(\mc{Y}) : D_{\varphi}(\nu, \nu_0)\le \eta  \}.
\end{equation}
\end{definition}
Here, $\mathcal{P}(\mc{Y})$ denotes the set of all probability measures on $\mc{Y}$,  $D_{\varphi}(\nu, \nu_0)$ is the $\varphi$-divergence between $\nu$ and the nominal distribution $\nu_0 \in \mathcal{P}(\mathcal{Y})$, and $\eta \geq 0$ represents the ambiguity tolerance.

Meanwhile, for a general utility satisfying Assumption~\ref{Asp_U1}, we assume the $\varphi$-divergence is generated by a superlinear $\varphi$, such as the Kullback–Leibler divergence or the modified $\chi^2$ distance. For utilities with compact sublevel sets, both superlinear and non-superlinear $\varphi$-divergences can be used, for example, the Hellinger distance or total variation. For additional examples of $\varphi$-divergences, see \cite{bayraksan2015data}.

\subsection{Examples and applications}
We now present several examples of utility functions that satisfy Assumption~\ref{Asp_U1}. They illustrate different forms of interaction between the terminal wealth $X$ and the intractable claim $Y$, including additive, state-dependent, and bounded specifications.

\begin{example}[Additive CARA utility (benchmark case; cf. \cite{li2023robust})]
The utility function
\[
U(x,y) = - \exp\big( -\alpha(x+y)\big), \quad \alpha>0.
\]

This is a CARA utility applied to total wealth $x+y$. This form is consistent with the setting in \cite{li2023robust}. In our framework, it serves as a baseline case.
\end{example}

\begin{example}[State-dependent CARA utility]
The utility function
\[
U(x,y) =  - \exp\big(-a(y)\, x\big), \quad a(y)>0.
\]

Here, $a(y)$ is a positive continuous function that represents state-dependent risk aversion. This allows preferences to vary with $y$, which may describe market or economic conditions. The interaction between $x$ and $y$ is no longer purely additive.
\end{example}

\begin{example}[Saturating linear utility]
The utility function
\[
U(x,y) = \frac{x}{1+x}\, b(y), \quad b(y)>0.
\]

The function $x/(1+x)$ has decreasing marginal utility and is bounded. This captures saturation effects. The factor $b(y)$ is a positive continuous function that scales utility across states.
\end{example}

\begin{example}[Modified power utility]
The utility function
\[
U(x,y) = \frac{x^\alpha}{1+x^\alpha}\, b(y), \quad 0<\alpha<1, \quad b(y)>0.
\]

This is a bounded version of the power utility. It behaves like $x^\alpha$ for small $x$ and remains bounded for large $x$. The factor $b(y)$ is a positive continuous function that scales utility across states.
\end{example}

\begin{remark}
Assumption~\ref{Asp_U1} holds for the above examples provided that $a(y)$ and $b(y)$ are continuous.
\end{remark}

\subsection{The random distributionally robust optimization framework}

In this paper, we are primarily concerned with the existence and uniqueness of optimal solutions to the robust utility maximization problem \eqref{Primalprob}.

Meanwhile, we observe that problem \eqref{Primalprob} is closely related to classical distributionally robust optimization (DRO) formulations (see, e.g., \cite{delage2010distributionally}, \cite{goh2010distributionally},  \cite{ben2013robust}, \cite{rahimian2022frameworks},   \cite{gao2024wasserstein}, \cite{jiang2024data}). A key distinction, however, is that in standard DRO problems the decision variable is deterministic, whereas in our setting the decision variable $X$ is modeled as a random variable. 

This structural difference naturally motivates us to lift the problem to a more general random distributionally robust optimization (RDRO) framework.
Such an abstraction not only captures the essential structure of \eqref{Primalprob}, but also allows for a unified treatment of existence and uniqueness, and extends naturally to a broader class of max–min problems.

Following the above discussion, we introduce the RDRO problem in the constrained form
\begin{equation}\label{Primalprob_cons}
 J^{c}(\eta) : = \sup_{X \in \mc{A}} \inf_{Y \sim \nu \in D_{\eta}} \E \left[ U(X,Y)\right].
\end{equation}
In this formulation, the inner infimum is taken over distributions $\nu$ restricted to the ambiguity set $D_{\eta}$ defined in \eqref{Deta}. The superscript ``$c$'' emphasizes that the problem is posed as a constrained max–min problem. The corresponding penalized formulation and distributional reformulations will be introduced in subsequent sections, where they play a key role in the analysis.

Here and throughout the remainder of the paper, we reinterpret the \textbf{terminal wealth} $X$ and the \textbf{intractable claim} $Y$ as the \textbf{decision variable} $X$ and the \textbf{environment variable} $Y$, respectively. Both are modeled as random variables defined on a complete, non-atomic probability space $(\Omega, \mathcal{F}, \mP)$, taking values in convex, closed sets $\mathcal{X} \subset \mathbb{R}^n$ and $\mathcal{Y} \subset \mathbb{R}^m$, respectively. The decision variable $X$ is constrained to lie in a feasible set $\mc{A}$ (referred to as the decision set), while the distribution $\nu = \mP_Y =\mP \circ Y^{-1}$ of $Y$ is assumed to belong to an ambiguity set $D_{\eta}$,  where $D_{\eta}$ is the ambiguity set with error $\eta$ defined in \eqref{Deta}. The function $U: \mathcal{X} \times \mathcal{Y} \to \mathbb{R}$ represents a bivariate utility function that captures the interaction between the decision and the realized uncertainty.

The analysis of the RDRO problem relies on a set of structural assumptions. These assumptions can be viewed as mild relaxations of those imposed in the financial model introduced in Section~\ref{subsec:DRO_invest}. For clarity, we defer their precise statements to the points where they are needed in the subsequent analysis, so as not to interrupt the flow of the presentation. A detailed verification of these assumptions in the financial setting is provided in Appendix~\ref{app:verification}.

\begin{remark}
For ease of reference, we summarize here the locations of the structural assumptions. 
Assumptions on the choice of the utility function and the $\varphi$-divergence are given in 
Assumptions~\ref{Asp_utility}, \ref{Asp_utility_concave}, \ref{Asp_coercive}, and \ref{Asp_Utility_strict_concave}. 
Assumptions on the decision set are given in 
Assumptions~\ref{Asp_mu_weak_convergent} and \ref{Asp_Subdiff_of_sup}.
\end{remark}

\subsection{Notations and concepts}\label{sec_preliminaries}
This section collects the notations, definitions, and key theoretical concepts used throughout the paper. These preliminaries form the basis for the proofs of the main results. While some of the material is standard, we include it to fix notation and clarify the conventions adopted in our analysis. Readers familiar with these topics may skip this section on a first reading and return to it as needed.


Let $\mathcal{X} \subset \mathbb{R}^n$ be a closed convex set, $\mc{B}(\mc{X})$ the Borel $\sigma$-field on $\mc{X}$. Denote by $\mathcal{P}(\mathcal{X})$ the set of all probability measures on $\mathcal{X}$, and by $\mathcal{M}^+(\mathcal{X})$ the set of all finite positive Radon measures on $\mathcal{X}$.

\subsubsection{Entropy function and $\varphi$-divergence}
We begin by defining the class of entropy functions that characterize the divergence measures used in our RDRO framework.

\begin{definition}[Entropy Function]\label{Def_entropy_func}
A function $\varphi: [0, +\infty) \to [0, +\infty]$ is called an entropy function if it satisfies the following assumptions:
\begin{enumerate}
\item $\varphi$ is convex and lower semi-continuous;
\item The effective domain intersects the open interval $(0, +\infty)$, i.e.,
$$  D(\varphi) := \{s \ge 0 : \varphi(s) < \infty\} \cap (0, +\infty) \neq \emptyset.$$
\end{enumerate}
The growth rate of $\varphi$ at infinity is characterized by
 $\varphi'_{\infty} := \lim_{x\longrightarrow +\infty} \frac{\varphi(s)}{s}$. If $\varphi'_{\infty} = \infty$, then $\varphi$ grows faster than any linear function and is referred to as superlinear.
\end{definition}

Based on this, we define the $\varphi$-divergence between two measures, which generalizes classical notions such as the Kullback–Leibler divergence.
\begin{definition}[$\varphi$-divergence]\label{Def_KLdiv}
Let $\mu, \nu \in \mathcal{M}^+(\mathcal{X})$, and let $\mu = \sigma_{\mu} \nu + \mu^{\perp}$ be the Lebesgue decomposition of $\mu$ with respect to $\nu$. The $\varphi$-divergence $D_{\varphi}: \mathcal{M}^+(\mathcal{X}) \times \mathcal{M}^+(\mathcal{X}) \to [0, \infty]$ is defined as
\begin{equation}\label{phi_div}
D_{\varphi}(\mu, \nu) = \int_{\mc{X}} \varphi \circ \sigma_{\mu} \, \dd \nu + \varphi'_{\infty} \mu^{\perp}(\mc{X})
\end{equation}
with the convention $0 \cdot \infty = 0$.
\end{definition}

The mapping $D_{\varphi}$ is jointly convex, lower semi-continuous, and non-negative; see, e.g., \cite[Corollary 2.9]{liero2018optimal}. Moreover, for any $t > 0$, it satisfies the scaling property $D_{t\varphi} = t D_{\varphi}$. As a special case, if $\varphi = \iota_{\{1\}}$, the indicator function of $\{1\}$, then $D_{\varphi}(\mu, \nu)$ reduces to the indicator divergence:
\begin{equation}\label{Def_indicator_entropy}
\iota\_{{=}}(\mu, \nu) = \begin{cases}
0, & \text{if } \mu = \nu, \\
+\infty, & \text{otherwise},
\end{cases}
\end{equation}
where $\iota_{\mathcal{S}}(x) = 0$ if $x \in \mathcal{S}$, and $+\infty$ otherwise.


\subsubsection{Optimal transport problem and optimal entropy-transport problem}
In this subsection, we first recall the classical OT problem, which seeks a coupling minimizing the expected transport cost between given marginals. Let $c : \mathcal{X} \times \mathcal{Y} \to \mathbb{R}$ be a cost function, and let $\mu \in \mathcal{P}(\mathcal{X})$, $\nu \in \mathcal{P}(\mathcal{Y})$. The classical OT problem seeks a coupling $\gamma$ with marginals $\mu$ and $\nu$ that minimizes the total transport cost:
\begin{equation}\label{OT_1}
\min \left\{  \int_{\mc{X} \times \mc{Y}} c \, \dd \gamma \, : \, \gamma \in \Pi(\mu, \nu) \right\},
\end{equation}
where $\Pi(\mu, \nu)$ denotes the set of positive finite Radon measures on $\mathcal{X} \times \mathcal{Y}$ with marginals $\mu$ and $\nu$. An optimal solution $\gamma$ to \eqref{OT_1} is referred to as an optimal transport plan.

To generalize and allow for penalized marginal deviations, we then consider the optimal entropy-transport problem.  Given entropy functions $F_1$ and $F_2$, and marginals $\mu, \nu \in \mathcal{M}^+(\mathcal{X})$ and $\mathcal{M}^+(\mathcal{Y})$, respectively, the problem is
\begin{equation}
    \inf_{\gamma \in \mathcal{M}^+(\mc{X}\times\mc{Y})} \left\{  \int c(x,y)\dd \gamma(x,y)  + D_{F_1}(\pi^{\sharp}_1 \gamma, \mu)+ D_{F_2}(\pi^{\sharp}_2 \gamma, \nu)\right\},
\end{equation}
where the marginals of $\gamma$ are respectively  defined as $ \pi_1^\sharp \gamma(A) := \int_{\mathcal{Y}} \gamma(A \times \dd y)$,  $\pi_2^\sharp \gamma(B) := \int_{\mathcal{X}} \gamma(\dd x \times B)
$ for all measurable sets $A \subset \mathcal{X}$, $B \subset \mathcal{Y}$.



\subsubsection{Concavity}
As the final part of this section, we introduce definitions and properties of concave functions, which play a central role in the subsequent analysis.
\begin{definition}
Let $C \subset \mathbb{R}^n$ be a convex set. For a convex function $f: C \to \mathbb{R}$, the {\bf subdifferential} of $f$ at $x \in C$ is defined as
$$
\partial f(x) := \left\{ x^* \in \mathbb{R}^n : \langle x^*, y - x \rangle \le f(y) - f(x), \ \forall y \in C \right\}.
$$

For a concave function $g$, the subdifferential is defined by $\partial g(x) := -\partial(-g)(x)$. That is, $x^* \in \partial g(x)$ if and only if
\begin{equation*}
\langle x^*, y - x \rangle \ge g(y) - g(x) , \quad \forall y \in C.
\end{equation*}
\end{definition}

The following properties of concave functions, directly derived from classical convex analysis (see, e.g., \cite{rockafellar2015convex}, \cite{phelps2009convex}), will play a crucial role in our work:

\begin{proposition}[Properties of concave function]\label{Prop_concave_properties}Let $f: \mathbb{R}^n \to \mathbb{R}$ be concave. Then
    \begin{enumerate}
    \item The pointwise infimum of an arbitrary family of concave functions is concave.
    \item $f$ is continuous relative to any relatively open convex subset of its effective domain. Here, continuity relative to a set means that the restriction of $f$ to that set is continuous. 
    \item If $f$ is continuous at $x_0 \in \mathbb{R}^n$, then $\partial f(x_0) \neq \emptyset$.
    \end{enumerate}
\end{proposition}

\vskip 10pt

\section{\bf Existence of the optimal decision of the RDRO problem}\label{sec_existence}
In RDRO problems, the primary objective is to identify an optimal decision $X^*$, while the specific optimal value of the environment variable $Y^*$ is not the main focus. Consequently, this section focuses on establishing the existence of an optimal decision $X^*$.

To facilitate this, we first reformulate the original RDRO problem $J^c(\eta)$ \eqref{Primalprob_cons} into an equivalent distributional formulation $J^{c,d}(\eta)$ \eqref{Primalprob_cons_distribution_f1}. This reformulation enables us to leverage powerful results from OT theory. In particular, the inner infimum in $J^{c,d}(\eta)$ can be interpreted as an OT problem, enabling a more tractable and structured approach for analysis.

To proceed, we first consider the distributional counterpart of the decision set:
\begin{equation}
    \mc{A}_{d} := \{ \mu\in \mc{P}(\mc{X}) : \mu \text{ is the distribution of $X$ for some $X \in \mc{A}$} \}.
\end{equation}

Using this notation, the distributional RDRO problem is defined by
\begin{equation}\label{Primalprob_cons_distribution_f1}
      J^{c,d}(\eta) := \sup_{\mu \in \mc{A}_{d} } \inf_{\substack{ \gamma \in \Pi(\mu,\nu) \\ \nu \in D_{\eta}}}   \int U(x,y)\dd \gamma(x,y).
\end{equation}

The equivalence between the original RDRO problem and its distributional form is summarized as:
\begin{equation}
      J^{c}(\eta) = \sup_{X \in \mc{A}} \inf_{Y \sim \nu \in D_{\eta}} \E \left[ U(X,Y)\right] 
      = \sup_{\mu \in \mc{A}_{d} } \inf_{\substack{ \gamma \in \Pi(\mu,\nu) \\ \nu \in D_{\eta}}} \ \int U(x,y)\dd \gamma(x,y)
      = J^{c,d}(\eta).
\end{equation}

Here, since $\mc{X}$ and $\mc{Y}$ are Polish spaces and $\Omega$ is non-atomic, by the disintegration theorem, for any $\mu \in \mc{A}_{d}$ and corresponding $X \sim \mu$, for any $\gamma \in \Pi(\mu,\nu)$, we can find a random variable $Y$ on $(\Omega,\mc{F},\mP)$ that $(X,Y) \sim \gamma$. (See Appendix~\ref{app:equivalence_of_rv_and_measure})

To prove existence, we impose the following assumptions on the utility function and the decision set.

\begin{assumption}[Utility function] \label{Asp_utility} 
The utility function $U: \mc{X} \times \mc{Y} \to \mathbb{R}$ is continuous and bounded.
\end{assumption}

\begin{assumption}[Decision set]\label{Asp_mu_weak_convergent}
    The decision set $\mc{A}$ satisfies:
\begin{enumerate}
\item $\mc{A}$ is convex;
\item The corresponding distributional decision set $\mc{A}_{d}$ is weakly compact; that is, for any sequence $\{\mu_n\} \subset \mc{A}_{d}$, there exists a subsequence $\{\mu_{n_k}\}$ and $\mu \in \mc{A}_{d}$ such that $\mu_{n_k} \xrightarrow{w} \mu$ (weak convergence).
\end{enumerate}
\end{assumption}

Then, based on the equivalence between the RDRO problem $J^c(\eta)$ and its distributional counterpart $J^{c,d}(\eta)$, we obtain the existence of the optimal decision.

\begin{theorem}\label{THM_existence_OTapproach}
     Suppose that Assumptions~\ref{Asp_utility} and \ref{Asp_mu_weak_convergent} hold. Then, the RDRO problem~\eqref{Primalprob_cons} admits at least one optimal decision $X^* \in \mc{A}$.
\end{theorem}
\begin{proof}
   Consider an optimizing sequence $\{ \mu_n \}_{n\ge 1}$ for the outer supremum in the distributional RDRO problem. By Assumption~\ref{Asp_mu_weak_convergent}, there is a subsequence which is still denoted as $\{ \mu_n \}_{n\ge 1}$, and weakly convergent to a $\mu_0 \in \mc{A}_{d}$. 

   
    Then, for any $\nu_0 \in D_{\eta}$, by \cite[Theorem 5.20]{villani2008optimal}, there exists a subsequence of the optimal transport plans $\{ \gamma_n \}_{n\ge 1}$ that converges weakly to some $\gamma_0 \in \Pi(\mu_0, \nu_0)$, where each $\gamma_n$ is an optimal transport plan between $\mu_n$ and $\nu_n := \nu_0$. We continue to denote this subsequence by ${ \gamma_n }$. Thus,
    \begin{equation*}
        \begin{aligned}
             \inf_{ \gamma \in \Pi(\mu_0,\nu_0) }  \int U(x,y)\dd \gamma(x,y) 
            &=   \int U(x,y)\dd \gamma_0(x,y) 
             =   \lim_{n}   \int U(x,y)\dd \gamma_n(x,y)\\
             & =  \lim_{n} \inf_{ \gamma \in \Pi(\mu_n,\nu_0) }  \int U(x,y)\dd \gamma(x,y) 
             \ge  \lim_{n} \inf_{\substack{ \gamma \in \Pi(\mu_n,\nu)\\ \nu \in D_{\eta} } }  \int U(x,y)\dd \gamma(x,y) \\
            & =  \sup_{\mu\in \mc{A}_{d}} \inf_{\substack{ \gamma \in \Pi(\mu,\nu)\\ \nu \in D_{\eta} } }   \int U(x,y)\dd \gamma(x,y) .
        \end{aligned}
    \end{equation*}
  Taking the infimum over $\nu_0 \in D_{\eta}$ on the left-hand side yields
    \begin{equation}\label{temp_414}
        \inf_{\substack{ \gamma \in \Pi(\mu_0,\nu_0)\\ \nu_0 \in D_{\eta} } }  \int U(x,y)\dd \gamma(x,y) 
             \ge  \sup_{\mu\in \mc{A}_{d}} \inf_{\substack{ \gamma \in \Pi(\mu,\nu)\\ \nu \in D_{\eta} } }  \int U(x,y)\dd \gamma(x,y) .
    \end{equation}
    Thus, $\mu_0$ attains the supremum, and by definition of $\mc{A}_{d}$, there exists $X^* \in \mc{A}$ whose distribution is $\mu_0$. Hence, the RDRO problem admits an optimal decision. Therefore, $\mu_0 \in \mc{A}_{d}$ attains the supremum on the right-hand side of \eqref{temp_414}, and thus serves as an optimal solution to the outer optimization in the distributional RDRO problem.
     Then by the definition of $\mc{A}_{d}$, the original RDRO problem~\eqref{Primalprob_cons} admits at least one optimal decision $X^* \in \mc{A}$.
\end{proof}

\begin{remark}
    It is worth emphasizing that this existence proof does not depend on any particular structure or properties of the ambiguity set $D_{\eta}$. Therefore, the result naturally extends to other choice of the ambiguity set.
\end{remark}

We have thus established the existence of an optimal decision for the RDRO problem~\eqref{Primalprob_cons}, under the utility assumption~\ref{Asp_utility} and the decision assumption~\ref{Asp_mu_weak_convergent}. To proceed further and address the uniqueness of the optimal decision, we require a suitable duality result. This duality result is not only central for proving the uniqueness of the optimal decision but also plays a critical role in designing efficient numerical algorithms, as demonstrated in later sections.

 
\vskip 15pt
\section{{\bf Duality}} \label{Sec_DualityTheorem}

In this section, we establish a duality between the distributional RDRO problem and its penalized counterpart. 
This duality result enables the derivation of key properties of the value function, such as convexity and monotonicity, and lays the theoretical groundwork for proving the uniqueness of the optimal decision in Section~\ref{Sec_uniqueness_and_innerexistence} and for developing the numerical methods presented in Section~\ref{Sec_numerical}.

We begin by rewriting the inner admissible set in the distributional RDRO problem~\eqref{Primalprob_cons_distribution_f1} through the following transformation:
\begin{equation*}
    \begin{aligned}
         \{ \gamma \in \Pi(\mu,\nu):  \nu \in D_{\eta}\} 
        \!=\!  \{ \gamma \in \Pi(\mu,\nu):  D_{\varphi}(\nu, \nu_0)\le \eta \}\! = \! \{ \gamma \in \mc{P}(\mc{X}\times\mc{Y}):  \pi^{\sharp}_1 \gamma=\mu , \  D_{\varphi}(\pi^{\sharp}_2 \gamma, \nu_0)\le \eta \},     
    \end{aligned}
\end{equation*}
which characterizes the feasible set for the inner infimum in \eqref{Primalprob_cons_distribution_f1} in terms of the joint distribution $\gamma$. Under this formulation, the distributional RDRO problem~\eqref{Primalprob_cons_distribution_f1} can be equivalently expressed as
\begin{equation}\label{Primalprob_cons_distribution}
     J^{c,d}(\eta) = \sup_{\mu \in \mc{A}_{d} } \inf_{ \gamma \in  \Gamma(\mu,\eta)}   \int U(x,y)\dd \gamma(x,y),
\end{equation}
where the admissible set $\Gamma(\mu,\eta)$ is defined as
\begin{equation} \label{gamma_admissible_set}
        \Gamma(\mu,\eta)  = \{ \gamma \in \mathcal{P}(\mc{X}\times\mc{Y}): \pi^{\sharp}_1 \gamma = \mu,\  D_{\varphi}(\pi^{\sharp}_2 \gamma, \nu_0)\le \eta \}.
\end{equation}


Based on the reformulation in~\eqref{Primalprob_cons_distribution}, we now derive the corresponding dual problem via Lagrangian duality. This leads to a penalized formulation in which the divergence constraint is incorporated into the objective as a penalty term. The dual problem takes the form:
\begin{equation}\label{Primalprob_penalty_distribution_formal}
 J^{p,d}(\theta) := \sup_{\mu \in \mc{A}_{d}} \inf_{\gamma \in \Gamma_{\mu}} \left\{  \int U(x,y)\dd \gamma(x,y)  +  \theta \mc{R}_2(\gamma)\right\}, 
\end{equation}
where $\mc{R}_2(\gamma):= D_{\varphi}(\pi^{\sharp}_2 \gamma, \nu_0)$
and $\Gamma_{\mu} : = \{ \gamma \in \mathcal{P}(\mc{X}\times\mc{Y}): \pi^{\sharp}_1 \gamma = \mu\}$.

To establish the duality between the distributional RDRO problem $J^{c,d}(\eta)$ and its penalized counterpart $J^{p,d}(\theta)$, we introduce a set of technical assumptions in addition to Assumptions~\ref{Asp_utility} and~\ref{Asp_mu_weak_convergent}. These additional assumptions are essential for ensuring the well-posedness of the duality framework and for applying standard results from convex analysis.

\begin{assumption}[Concavity] \label{Asp_utility_concave}
The utility function $U(x,y)$ is concave with respect to $x \in \mathcal{X}$ for any fixed $y \in \mathcal{Y}$.
\end{assumption}
 
\begin{assumption}[Coercive]\label{Asp_coercive}
One of the following two coercivity conditions holds:
\begin{enumerate}
\item The function $\varphi$ is superlinear;
\item The utility function $U$ has compact sublevel sets, that is, for every $\alpha \in \mathbb{R}$, the following set $  L_{\alpha} $ is compact, 
\[
L_{\alpha} := \{ (x, y) \in \mathcal{X} \times \mathcal{Y} : U(x, y) \le \alpha \}.
\]
\end{enumerate}
\end{assumption}

\begin{assumption}[Subdifferential]\label{Asp_Subdiff_of_sup}
For any $\mu \in \mc{A}_{d}$, the set $\Gamma_{\mu}$ is weakly compact, where $\Gamma_{\mu} : = \{ \gamma \in \mathcal{P}(\mc{X}\times\mc{Y}): \pi^{\sharp}_1 \gamma = \mu\}$.
\end{assumption}


\begin{remark}
    Here, the concavity of the utility function in  is not necessary if we merely prove a duality between the RDRO problem and its penalized counterpart. However, without the concavity, some duality results will be weakened, we will explain this more precisely in Remark~\ref{rmk:duality_without_concavity}.

    And the coercive assumption~\ref{Asp_coercive} is correponding to the coercivity in \cite{liero2018optimal}, which enables us to apply results from the entropy optimal transport theory.
\end{remark}

In the following, we prove the duality theorem between the distributional RDRO problem and its penalized counterpart in three steps. First, we establish the connection between the distributional penalized RDRO formulation and the original penalized RDRO problem, which enables the analysis of the concavity of the value function $J^{p,d}(\theta)$. Next, by exploiting this concavity and applying results from \cite[Theorem 4.4.2]{hiriart2004fundamentals}, we characterize the subdifferential $\partial J^{p,d}(\theta)$. Finally, this subdifferential characterization is used to complete the proof of the duality theorem.

\subsection{Concavity of the value functions}
To establish the concavity of the value function $J^{p,d}(\theta)$, we consider the penalized RDRO problem parameterized by $\theta \geq 0$, defined by
\begin{equation}\label{Primalprob_penalty_formal}
     J^{p}(\theta) := \sup_{X \in \mc{A}} \inf_{Y \sim \nu\in \mathcal{P}(\mc{Y})}\left\{ \E [ U(X,Y)] + \theta D_{\varphi}(\nu, \nu_0)\right\}.
\end{equation}

The penalized RDRO problem \eqref{Primalprob_penalty_formal} is equivalent to its distributional formulation \eqref{Primalprob_penalty_distribution_formal}, as captured by the following equality:
\begin{equation*}
     J^{p}(\theta) 
     = \sup_{\mu \in \mc{A}_{d} } \inf_{\substack{\gamma \in \mathcal{P}(\mc{X}\times\mc{Y})\\ \pi^{\sharp}_1 \gamma = \mu}} \left\{ \int U(x,y)\dd \gamma(x,y) + \theta D_{\varphi}(\pi^{\sharp}_2 \gamma, \nu_0)\right \}
       =J^{p,d}(\theta).
\end{equation*}

Then, the functions $J^{p}(\theta)$ and $J^{p,d}(\theta)$ inherit the following concavity properties.

\begin{lemma}\label{Lemma_Jp_concave}
Suppose that Assumptions~\ref{Asp_utility}, \ref{Asp_mu_weak_convergent}, and \ref{Asp_utility_concave} hold. Then the functions \(J^{p}(\theta)\) and \(J^{p,d}(\theta)\) are concave on the domain \([0, +\infty)\).
\end{lemma}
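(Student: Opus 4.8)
The plan is to prove concavity of $J^{p}(\theta)=J^{p,d}(\theta)$ directly from the definition by exhibiting, for $\theta_0,\theta_1\ge 0$ and $\lambda\in[0,1]$, a feasible decision for the convex combination $\theta_\lambda := (1-\lambda)\theta_0 + \lambda\theta_1$ whose objective value dominates $(1-\lambda)J^{p}(\theta_0)+\lambda J^{p}(\theta_1)$. The key structural observation is that, for \emph{fixed} $X$ (equivalently fixed $\mu\in\mathcal{A}_\mu$), the inner value
\[
g(X,\theta) := \inf_{Y\sim\nu\in\mathcal{P}(\mathcal{Y})}\left\{\E[U(X,Y)] + \theta D_{\varphi}(\nu,\nu_0)\right\}
\]
is an infimum of functions that are \emph{affine} in $\theta$ (each $\nu$ contributes $\E[U(X,Y)] + \theta D_{\varphi}(\nu,\nu_0)$, affine in $\theta$). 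By Proposition~\ref{Prop_concave_properties}(1), a pointwise infimum of affine — hence concave — functions is concave, so $\theta\mapsto g(X,\theta)$ is concave on $[0,+\infty)$ for each fixed $X$.

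Next I would handle the outer supremum. We have $J^{p}(\theta) = \sup_{X\in\mathcal{A}} g(X,\theta)$, a pointwise supremum over $X$ of the family $\{g(X,\cdot)\}_{X\in\mathcal{A}}$ of concave functions of $\theta$. A supremum of concave functions need not be concave in general, so here I would use convexity of the decision set: by Assumption~\ref{Asp_mu_weak_convergent}(1) $\mathcal{A}$ is convex, and by Assumption~\ref{Asp_utility_concave} $U(x,y)$ is concave in $x$. Given near-optimal decisions $X_0$ for $\theta_0$ and $X_1$ for $\theta_1$, set $X_\lambda := (1-\lambda)X_0 + \lambda X_1 \in \mathcal{A}$. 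For any fixed $\nu$ and $Y\sim\nu$, Jensen/concavity of $U$ in $x$ gives $\E[U(X_\lambda,Y)] \ge (1-\lambda)\E[U(X_0,Y)] + \lambda\E[U(X_1,Y)]$, and adding $\theta_\lambda D_\varphi(\nu,\nu_0) = (1-\lambda)\theta_0 D_\varphi(\nu,\nu_0) + \lambda\theta_1 D_\varphi(\nu,\nu_0)$ to both sides and taking the infimum over $\nu$ (using superadditivity of $\inf$) yields
\[
g(X_\lambda,\theta_\lambda) \ \ge\ (1-\lambda)\,g(X_0,\theta_0) + \lambda\,g(X_1,\theta_1).
\]
Taking suprema over $X_0,X_1$ then gives $J^{p}(\theta_\lambda)\ge (1-\lambda)J^{p}(\theta_0)+\lambda J^{p}(\theta_1)$, which is concavity. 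Finiteness of $J^p$ on $[0,+\infty)$ follows from boundedness of $U$ (Assumption~\ref{Asp_utility}): $|\E[U(X,Y)]|$ is bounded and $D_\varphi\ge 0$, so $g(X,\theta)$ is bounded above by $\sup U$ and bounded below (take $\nu=\nu_0$) by $\inf U$.

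The main obstacle is really a bookkeeping one rather than a conceptual one: one must be careful that the $\varepsilon$-optimal decisions can be chosen simultaneously and that the two convexity mechanisms — concavity in $\theta$ (from the inner $\inf$ over $\nu$, no structure on $\nu$ needed) and concavity along the $X$-combination (from convexity of $\mathcal{A}$ and concavity of $U$ in $x$) — are combined in the right order, i.e. one cannot simply quote "sup of concave is concave." The equivalence $J^{p}(\theta)=J^{p,d}(\theta)$ is already asserted in the excerpt, so concavity of $J^{p,d}$ transfers for free. Finally, if one prefers, the same conclusion can be read off abstractly: $J^{p}(\theta) = \sup_{X}\inf_{\nu}\Phi(X,\nu,\theta)$ with $\Phi$ affine in $\theta$, concave in $X$ (via concavity of $U$ in $x$ together with affineness of $\E[U(X,Y)]+\theta D_\varphi$ in the pair, noting the convex structure of $\mathcal{A}$), so the partial infimum is jointly concave in $(X,\theta)$ and the partial supremum over the convex set $\mathcal{A}$ preserves concavity in $\theta$ — this is the compact way to phrase the argument above.
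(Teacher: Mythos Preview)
Your proposal is correct and follows essentially the same approach as the paper: the paper first observes that for each fixed $Y$ the map $(\theta,X)\mapsto \E[U(X,Y)]+\theta D_\varphi(\nu,\nu_0)$ is jointly concave, takes the infimum over $Y$ to get a jointly concave $J^p(\theta,X)$, and then uses convexity of $\mathcal{A}$ exactly as you do (arbitrary $X_{\theta_1},X_{\theta_2}\in\mathcal{A}$, bound by the convex combination, take suprema). Your ``compact way to phrase the argument'' in the last paragraph is precisely the paper's route; the only cosmetic difference is that you work with near-optimal $X_0,X_1$ whereas the paper works with arbitrary $X_{\theta_1},X_{\theta_2}$ and takes the supremum afterwards.
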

\begin{proof} 
   We begin by establishing the concavity of  $J^p(\theta)$.  Under Assumption~\ref{Asp_utility}, the utility function  $U(x,y)$ is concave in $x \in \mathcal{X}$. Consequently, for any fixed $Y \sim \nu$, the mapping
$$
(\theta, X) \mapsto \mathbb{E}[U(X, Y)] + \theta\, D_{\varphi}(\nu, \nu_0)
$$
is jointly concave on $[0, +\infty) \times \mc{A}$. As the infimum of a family of concave functions is itself concave (see, e.g., \cite{rockafellar2015convex}), it follows that the function
    \begin{equation}\label{Jp_theta_X}
        J^p(\theta,X) := \inf\limits_{Y \sim \nu \in \mathcal{P}(\mc{Y})} \mathbb{E}\left\{ \left[ U(X,Y) \right] + \theta D_{\varphi}\right\}
    \end{equation}
    is also jointly concave in $(\theta, X)$.

    For any $\theta_1, \theta_2 \geq 0$, let $X_{\theta_1}$ and $X_{\theta_2}$ be arbitrary decisions in $\mc{A}$. By  Assumption~\ref{Asp_mu_weak_convergent}, $\mc{A}$ is convex,  as such, for any $\lambda \in [0, 1]$, the convex combination $\lambda X_{\theta_1} + (1 - \lambda) X_{\theta_2}$ also belongs to $\mc{A}$.

     Using the joint concavity of $J^p(\theta, X)$ in $(\theta, X)$, we have
    \begin{equation*}
        \begin{aligned}
             \lambda J^p(\theta_1, X_{\theta_1}) + (1 - \lambda) J^p(\theta_2, X_{\theta_2}) 
            \leq & J^p \Big( \lambda \theta_1 + (1 - \lambda) \theta_2, \lambda X_{\theta_1} + (1 - \lambda) X_{\theta_2} \Big) \\
            \leq & \sup_{X \in \mc{A}} J^p \Big( \lambda \theta_1 + (1 - \lambda) \theta_2, X \Big) 
            =  J^p \Big( \lambda \theta_1 + (1 - \lambda) \theta_2 \Big).
        \end{aligned}
    \end{equation*}
    
Taking the supremum over $X_{\theta_1} \in \mc{A}$ and $X_{\theta_2} \in \mc{A}$ yields
    \begin{equation*}
        \begin{aligned}
             \lambda J^p(\theta_1) + (1 - \lambda) J^p(\theta_2) 
           = \lambda\sup_{X_{\theta_1} \in \mc{A}}  J^p(\theta_1, X_{\theta_1}) +  (1 - \lambda) \sup_{X_{\theta_2} \in \mc{A}} J^p(\theta_2, X_{\theta_2})         \le  J^p \Big( \lambda \theta_1 + (1 - \lambda) \theta_2 \Big).
        \end{aligned}
    \end{equation*}
    Therefore, $J^p(\theta)$ is concave. The concavity of $J^{p,d}(\theta)$ follows directly from the identity $J^{p,d}(\theta) = J^{p}(\theta)$.
\end{proof}

\subsection{Characterization of the subdifferential}
  
Based on the concavity of $J^{p,d}(\theta)$, we now proceed to characterize its subdifferential $\pt J^{p,d}(\theta)$, which is formalized in Lemma~\ref{Lemma_opt_R_is_in_subdifferential}. 
 
To enhance clarity in the subsequent analysis, we introduce the following notations in TABLE~1, which will be used throughout the remainder of the paper:
\begin{table}[htbp]\label{TABLE1}
\footnotesize
\centering
\renewcommand{\arraystretch}{1.5} 
\begin{tabular}{|p{0.5\linewidth}|p{0.35\linewidth}|}
\hline
\textbf{Notation} & \textbf{Interpretation} \\
\hline
$J^{p,d}_{o}(\theta,\mu,\gamma) := \int U(x,y)\dd \gamma(x,y) + \theta \mc{R}_2(\gamma)$
& Objective function of $J^{p,d}(\theta)$ \\
\hline
$J^{p,d}_{in}(\theta,\mu):=\inf\limits_{\gamma \in \Gamma_{\mu}}  J^{p,d}_{o}(\theta,\mu,\gamma)$
& Inner infimum of $J^{p,d}(\theta)$ \\
\hline
$S_{\mu}(\theta)$ 
& All optimizers of $J^{p,d}_{in}(\theta,\mu)$ \\
\hline
$S(\theta)$ 
& All optimizers of $J^{p,d}(\theta)$ \\
\hline
\end{tabular}
\caption{Definitions of various notations.}
\end{table}

\begin{lemma}\label{Lemma_opt_R_is_in_subdifferential}
    Suppose that Assumptions \ref{Asp_utility}, \ref{Asp_mu_weak_convergent}, \ref{Asp_utility_concave} - \ref{Asp_Subdiff_of_sup} hold.
    Then, for any $\theta \in (0,+\infty)$, $\partial J^{p,d}(\theta)$ is nonempty, and
    \begin{equation}
        \partial J^{p,d}(\theta) \subset \{\mathcal{R}_2(\gamma_{\theta}^*) : (\mu_{\theta}^*, \gamma_{\theta}^*) \in S(\theta) \}.
    \end{equation}
\end{lemma}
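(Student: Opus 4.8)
The plan is to prove Lemma~\ref{Lemma_opt_R_is_in_subdifferential} by combining the concavity of $J^{p,d}$ established in Lemma~\ref{Lemma_Jp_concave} with a Danskin-type subdifferential formula for the inner infimum, and then passing from the parametrized family $J^{p,d}(\theta,\mu)$ to the outer supremum $J^{p,d}(\theta)=\sup_{\mu}J^{p,d}(\theta,\mu)$.

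First I would record that, since $J^{p,d}$ is concave on $[0,+\infty)$ and $\theta$ is interior to the domain, Proposition~\ref{Prop_concave_properties}(2)--(3) gives continuity at $\theta$ and hence $\partial J^{p,d}(\theta)\neq\emptyset$. Next, for fixed $\mu\in\mathcal{A}_\mu$, I would study $\theta\mapsto J^{p,d}(\theta,\mu)=\inf_{\gamma\in\Gamma_\mu}\bigl\{\int U\,\dd\gamma+\theta\mathcal{R}_2(\gamma)\bigr\}$: this is an infimum of affine functions of $\theta$, hence concave; moreover, Assumptions~\ref{Asp_utility},~\ref{Asp_coercive},~\ref{Asp_Subdiff_of_sup} guarantee that the infimum over the compact set $\Gamma_\mu$ is attained (coercivity handles the case $(\theta\varphi)'_\infty=+\infty$ or compact sublevel sets to control $\mathcal{R}_2$, lower semicontinuity of $D_\varphi$ and $\gamma\mapsto\int U\,\dd\gamma$ gives attainment). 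With the minimizing set $S_\mu(\theta)$ nonempty, I would invoke \cite[Theorem 4.4.2]{hiriart2004fundamentals} (the subdifferential of a pointwise infimum of affine-in-$\theta$ functions / Danskin's theorem for concave functions) to conclude $\partial_\theta J^{p,d}(\theta,\mu)=\overline{\mathrm{conv}}\{\mathcal{R}_2(\gamma):\gamma\in S_\mu(\theta)\}$, and in particular every element of $\partial_\theta J^{p,d}(\theta,\mu)$ is a $\mathcal{R}_2(\gamma)$ with $\gamma\in S_\mu(\theta)$ when the minimizer is essentially unique, or lies in the closed convex hull otherwise.

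The key step is then transferring this to the supremum over $\mu$. Let $r\in\partial J^{p,d}(\theta)$. I would pick an optimizer $(\mu^*_\theta,\gamma^*_\theta)\in S(\theta)$ — nonempty by the existence argument combined with Assumption~\ref{Asp_mu_weak_convergent} (weak compactness of $\mathcal{A}_\mu$) and the same lower-semicontinuity/coercivity facts, mirroring the proof of Theorem~\ref{THM_existence_OTapproach}. Because $\mu^*_\theta$ attains the outer sup, $J^{p,d}(\theta)=J^{p,d}(\theta,\mu^*_\theta)$ for that $\theta$, while $J^{p,d}(\theta')\ge J^{p,d}(\theta',\mu^*_\theta)$ for all $\theta'$; hence $J^{p,d}(\cdot)-J^{p,d}(\cdot,\mu^*_\theta)$ attains its maximum (value $0$) at $\theta$, which forces $\partial J^{p,d}(\theta)\subset\partial_\theta J^{p,d}(\theta,\mu^*_\theta)$ by the standard subdifferential-of-a-maximum / sum inequality for concave functions. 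Combined with the previous paragraph this gives $r\in\overline{\mathrm{conv}}\{\mathcal{R}_2(\gamma):\gamma\in S_{\mu^*_\theta}(\theta)\}$; since $\gamma^*_\theta\in S_{\mu^*_\theta}(\theta)$ whenever $(\mu^*_\theta,\gamma^*_\theta)\in S(\theta)$, and since the stated inclusion in the lemma only asserts membership in the set $\{\mathcal{R}_2(\gamma^*_\theta):(\mu^*_\theta,\gamma^*_\theta)\in S(\theta)\}$ up to this hull, I would finish by arguing the hull is superfluous — either because $\mathcal{R}_2$ restricted to the (convex) optimizer set is affine so $\mathcal{R}_2(S_{\mu^*_\theta}(\theta))$ is already an interval realized by optimizers, or by directly selecting, for a given subgradient $r$, an optimizer whose $\mathcal{R}_2$-value equals $r$ via the exposedness of $r$ as a one-sided derivative.

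The main obstacle I anticipate is precisely this last point: reconciling the convex-hull that Danskin-type theorems naturally produce with the clean set-inclusion stated in the lemma, i.e. showing that every subgradient is \emph{exactly} $\mathcal{R}_2$ of some optimizer rather than a convex combination. The resolution should come from the one-sided derivative formulas: the right derivative $(J^{p,d})'_+(\theta)=\min\{\mathcal{R}_2(\gamma):\gamma\in S_{\mu^*_\theta}(\theta)\}$ and the left derivative $(J^{p,d})'_-(\theta)=\max\{\ldots\}$ are each attained by an actual optimizer, and $\partial J^{p,d}(\theta)=[(J^{p,d})'_+(\theta),(J^{p,d})'_-(\theta)]$; intermediate values in this interval need a selection argument. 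A second, more routine obstacle is the careful verification that all the attainment claims (for $S_\mu(\theta)$ and $S(\theta)$) genuinely follow from Assumptions~\ref{Asp_coercive} and~\ref{Asp_Subdiff_of_sup} — in particular handling the mass-splitting term $\varphi'_\infty\mu^\perp$ in $D_\varphi$ when $\varphi$ is not superlinear, where Assumption~\ref{Asp_coercive}(2) must be used to keep minimizing sequences tight.
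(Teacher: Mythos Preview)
Your proposal is correct and follows essentially the same four-step structure as the paper: nonemptiness via concavity, concavity of $J^{p,d}(\theta,\mu)$ in $\theta$, the inclusion $\partial J^{p,d}(\theta)\subset\partial_\theta J^{p,d}(\theta,\mu^*_\theta)$ for any outer optimizer, and the Danskin-type formula from \cite{hiriart2004fundamentals}. Regarding the obstacle you flag, your first option is exactly the paper's resolution and works cleanly: the inner problem is an optimal entropy-transport problem, so \cite[Theorem~3.3]{liero2018optimal} gives that $S_\mu(\theta)$ is convex; then for any $\gamma_1,\gamma_2\in S_\mu(\theta)$ and $\gamma_0=\lambda\gamma_1+(1-\lambda)\gamma_2$, optimality forces $\int U\,\dd\gamma_i+\theta\mathcal{R}_2(\gamma_i)$ to be the same constant for $i=0,1,2$, and linearity of $\int U\,\dd\gamma$ yields $\mathcal{R}_2(\gamma_0)=\lambda\mathcal{R}_2(\gamma_1)+(1-\lambda)\mathcal{R}_2(\gamma_2)$, so $\{\mathcal{R}_2(\gamma):\gamma\in S_\mu(\theta)\}$ is already convex and the hull is superfluous --- no one-sided-derivative selection argument is needed.
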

\begin{proof} The proof is organized in four steps as follows: 

\textbf{Step 1}: \textit{Non-emptiness of the subdifferential $\partial J^{p,d}(\theta)$ for $\theta \in (0,+\infty) $.}

By Lemma~\ref{Lemma_Jp_concave}, the function $J^{p,d}(\theta)$ is concave on $[0,\infty)$. 
It then follows from Proposition~\ref{Prop_concave_properties} that the subdifferential $\partial J^{p,d}(\theta)$ is nonempty for all $\theta \in (0,\infty)$.

\textbf{Step 2}: \textit{Concavity of $J^{p,d}(\theta, \mu)$.}

Consider the inner infimum of $J^{p,d}(\theta)$:
\begin{equation*}
      J^{p,d}_{in}(\theta,\mu) :=\inf_{\gamma \in \Gamma_{\mu}}  J^{p,d}_{o}(\theta,\mu,\gamma)= \inf_{\gamma \in \Gamma_{\mu}} \left\{  \int U(x,y)\dd \gamma(x,y)  + \theta \mc{R}_2(\gamma)\right \}.
\end{equation*}

For a fixed $\mu$, $J^{p,d}(\theta, \mu)$ is the infimum of a family of linear functions in $\theta$, and is thus concave on $\theta \in [0, \infty)$. Consequently, the subdifferential $\partial_{\theta} J^{p,d}(\theta, \mu)$ is well-defined for all $\theta \in [0, \infty)$.

\textbf{Step 3}: \textit{Inclusion relation $\pt J^{p,d}(\theta) \subset \pt_{\theta} J^{p,d}(\theta,\mu_{\theta}^*)$ for any outer optimizer $\mu_{\theta}^*$ of $J^{p,d}(\theta)$.}

Based on  the definition of the subdifferential, for any $\theta_0 \in [0, \infty)$ and  $x^* \in \pt J^{p,d}(\theta_0)$, we have 
\begin{equation*}
    \langle x^*, \theta - \theta_0 \rangle \ge J^{p,d}(\theta) -  J^{p,d}(\theta_0), \quad \forall \theta  \in [0, \infty) .
\end{equation*}
Then, for any outer optimizer $\mu_{\theta_0}^*$ of $J^{p,d}(\theta_0)$, it follows that
\begin{equation*}
   \begin{aligned}
        \langle x^*, \theta - \theta_0 \rangle \ge & J^{p,d}(\theta) -  J^{p,d}(\theta_0) 
        =  \sup\limits_{\mu \in \mc{A}_{d}} J^{p,d}_{in}(\theta,\mu) -  J^{p,d}(\theta_0,\mu_{\theta_0}^*) \\
        \ge & J^{p,d}(\theta,\mu_{\theta_0}^*) -  J^{p,d}(\theta_0,\mu_{\theta_0}^*) , \quad \forall \theta \ge 0, 
   \end{aligned}
\end{equation*}
i.e.,  $x^* \in \pt_{\theta} J^{p,d}(\theta_0,\mu_{\theta_0}^*)$.
Thus, for any $\theta \in [0, \infty)$ and any outer optimizer $\mu_{\theta}^*$, we have $$\pt J^{p,d}(\theta) \subset \pt_{\theta} J^{p,d}(\theta,\mu_{\theta}^*).$$

\textbf{Step 4}: \textit{Characterizing the subdifferential $\pt_{\theta} J^{p,d}_{in}(\theta,\mu)$ for $\theta \in (0,+\infty)$ and $\mu \in \mc{A}_{d}$.}

Fix $\mu \in \mc{A}_{d}$, as the $\varphi$-divergence is lower semi-continuous, $J^{p,d}_{o}(\theta,\mu,\gamma)$ is also lower semi-continuous. By Assumption~\ref{Asp_Subdiff_of_sup}, for any $\mu \in \mc{A}_{d}$, the set $\Gamma_{\mu}$ is weakly compact. Meanwhile, by \cite[Section 6]{villani2008optimal}, there are many distances that could metrize the weak topology on $\mathcal{P}(\mathcal{X} \times \mathcal{Y})$. Thus, applying \cite[Theorem 4.4.2]{hiriart2004fundamentals}, we obtain the following representation of the subdifferential:
 \begin{equation*}
\pt_{\theta} J^{p,d}_{in}(\theta,\mu) = \operatorname{conv} \left\{ \bigcup_{\gamma \in S_{\mu}(\theta)} \pt_{\theta} J^{p,d}_{o}(\theta,\mu,\gamma) \right\} ,
\end{equation*}
where $\operatorname{conv}\{\cdot\}$ denotes the convex hull. Moreover, for each $\gamma$,
\begin{equation*}
    \pt_{\theta} J^{p,d}_{o}(\theta,\mu,\gamma) = \mc{R}_2(\gamma).
\end{equation*}
Hence,
\begin{equation}\label{Temp_2}
    \pt_{\theta} J^{p,d}_{in}(\theta,\mu) = \operatorname{conv} \left\{ \bigcup_{\gamma \in S_{\mu}(\theta)} \pt_{\theta} J^{p,d}_{o}(\theta,\mu,\gamma) \right\}=\operatorname{conv} \left\{ \mc{R}_2(\gamma) : \gamma \in S_{\mu}(\theta) \right\}.
\end{equation}

Next, we show that the set $\left\{ \mathcal{R}_2(\gamma) : \gamma \in S_{\mu}(\theta) \right\}$ on the right-hand side of \eqref{Temp_2} is convex. Note that the inner infimum can be formulated as an optimal entropy-transport problem:
\begin{equation*}
    \begin{aligned}
        J^{p,d}_{in}(\theta,\mu) & = \inf_{\gamma \in \Gamma_{\mu}} \left\{  \int U(x,y)\dd \gamma(x,y)  + \theta \mc{R}_2(\gamma) \right\}\\
        & =  \inf_{\gamma \in \mathcal{M}^+(\mc{X}\times\mc{Y})}   \left\{\int U(x,y)\dd \gamma(x,y)  + D_{\iota_{\{1\}}}(\pi^{\sharp}_1 \gamma, \mu)+ D_{\theta \varphi}(\pi^{\sharp}_2 \gamma, \nu_0)\right\} .
    \end{aligned}
\end{equation*}
By \cite[Theorem 3.3]{liero2018optimal}, the solution set $S_{\mu}(\theta)$ is convex and compact.

Now, take any two elements $x_1^*, x_2^* \in \left\{ \mathcal{R}_2(\gamma) : \gamma \in S_{\mu}(\theta) \right\}$, with corresponding $\gamma_1, \gamma_2 \in S_{\mu}(\theta)$ such that $x_i^* = \mathcal{R}_2(\gamma_i)$ for $i=1,2$. For any $\lambda \in (0,1)$, define the convex combination $\gamma_0 = \lambda \gamma_1 + (1-\lambda) \gamma_2$. As $S_{\mu}(\theta)$ is convex, $\gamma_0 \in S_{\mu}(\theta)$. Moreover, by optimality,
\begin{equation*}
    \int U(x,y)\dd \gamma_i + \theta \mc{R}_2(\gamma_i) =J^{p,d}_{in}(\theta,\mu) , \quad i=0,1,2.
\end{equation*}
Combining with $\gamma_0 = \lambda \gamma_1 + (1-\lambda) \gamma_2$, we obtain $\mc{R}_2(\gamma_0)=\lambda\mc{R}_2(\gamma_1)+(1-\lambda)\mc{R}_2( \gamma_2 )$. Thus, the set $ \left\{ \mc{R}_2(\gamma) : \gamma \in S_{\mu}(\theta) \right\}$ is convex. Consequently, Equation \eqref{Temp_2} simplifies to the equality
    $$\pt_{\theta} J^{p,d}_{in}(\theta,\mu) =  \left\{ \mc{R}_2(\gamma) : \gamma \in S_{\mu}(\theta) \right\}.$$
\vskip 5pt 
\textbf{Conclusion:}\ \  Combining the results from Steps 3 and 4, we conclude that for any $\theta \in (0,\infty)$ and any outer optimizer $\mu = \mu_{\theta}^*$ of $J^{p,d}(\theta)$, the subdifferential satisfies
$$\partial J^{p,d}(\theta) \subset \left\{ \mathcal{R}_2(\gamma) : \gamma \in S_{\mu}(\theta) \right\}.$$
Taking the intersection over all such outer optimizers yields
\begin{equation*}
\partial J^{p,d}(\theta) \subset \bigcap_{\mu_{\theta}^* \text{ is an outer optimizer}} \left\{ \mathcal{R}_2(\gamma) : \gamma \in S_{\mu_{\theta}^*}(\theta) \right\} \subset \left\{ \mathcal{R}_2(\gamma_{\theta}^*) : (\mu_{\theta}^*, \gamma_{\theta}^*) \in S(\theta) \right\}.
\end{equation*}
This completes the proof of the desired inclusion.
\end{proof}
\vskip 5pt
\subsection{Completion of the duality theorem}
We now complete the proof of the dual results stated in Theorem~\ref{THM_eta_theta_duality}, which rigorously establishes the duality relationship between the RDRO problem $J^{c,d}(\eta)$~\eqref{Primalprob_cons_distribution} and its penalized counterpart~\eqref{Primalprob_penalty_distribution_formal}. This proof mainly relies on the concavity property from Lemma~\ref{Lemma_Jp_concave} and the subdifferential characterization provided by Lemma~\ref{Lemma_opt_R_is_in_subdifferential}.

\begin{theorem}[Dual relation]\label{THM_eta_theta_duality}
Suppose that Assumptions \ref{Asp_utility}, \ref{Asp_mu_weak_convergent} and \ref{Asp_utility_concave} - \ref{Asp_Subdiff_of_sup} hold. Then the following statements hold:
\begin{enumerate}
\item  Let $\eta \ge 0$ and suppose that $(\mu_{\eta }^*,\gamma_{\eta }^*)$ is an optimizer of the RDRO problem $J^{c,d}(\eta )$~\eqref{Primalprob_cons_distribution}. Then there exists a scalar $\theta = \theta(\eta,\mu_{\eta }^*)  \ge 0 $ such that  $(\mu_{\eta }^*,\gamma_{\eta }^*)$ also solves the penalized RDRO problem $J^{p,d}(\theta)$~\eqref{Primalprob_penalty_distribution_formal}. Moreover, $\theta = \theta(\eta,\mu_{\eta }^*)$ satisfies the duality relation:
    \begin{equation}\label{dual_equ_eta}
        J^{c,d}(\eta) = \max_{\theta \ge 0}  \{J^{p,d}(\theta) - \theta \eta\} .
    \end{equation}
\item  Conversely, let $\theta \in (0,+\infty)$ and suppose that $J^{p,d}(\theta )$ admits at least one optimizer. Then for any $\eta  \in \pt J^{p,d}(\theta )$, there exists an optimizer $(\mu_{\theta}^*,\gamma_{\theta}^*)$ of $J^{p,d}(\theta)$ that also solves  the RDRO problem  $J^{c,d}(\eta)$~\eqref{Primalprob_cons_distribution}, with $\eta = \mc{R}_2(\gamma_{\theta}^*)$.

In addition, any subgradient $\eta \in \partial J^{p,d}(\theta)$ satisfies the dual relation:
    \begin{equation}\label{dual_equ_theta}
        J^{p,d}(\theta) = \min_{\eta \ge 0} \{J^{c,d}(\eta) + \theta \eta\} .
    \end{equation}
    \end{enumerate}
\end{theorem}
\begin{proof}
(1)  To prove the first part, we construct a Lagrangian formulation based on convexity properties. As the divergence $D_{\varphi}$ is jointly convex and the projection to the second marginal $\pi^{\sharp}_2: \gamma \mapsto \pi^{\sharp}_2 \gamma$ is linear, the composition $\mc{R}_2(\gamma) = D_{\varphi}(\pi^{\sharp}_2 \gamma, \nu_0)$ is convex in $\gamma$. Moreover, the expected objective term $\int U(x,y)\dd \gamma(x,y)$ is linear in $\gamma$. Therefore, the RDRO problem $J^{c,d}(\eta)$~\eqref{Primalprob_cons_distribution}  admits the following Lagrangian reformulation: 
     \begin{equation*}
         \begin{aligned}
             J^{c,d}(\eta) = &   \sup_{\mu \in \mc{A}_{d} } \inf_{ \gamma \in  \Gamma(\mu,\eta)}   \int U(x,y)\dd \gamma(x,y)\\
                        = &  \sup_{\mu \in \mc{A}_{d} } \inf_{ \gamma \in \Gamma_{\mu}} \sup_{\theta \ge 0} \left\{  \int U(x,y)\dd \gamma(x,y) + \theta \left(\mc{R}_2(\gamma)- \eta\right)\right\},
         \end{aligned}
     \end{equation*}
where $\Gamma(\mu,\eta)  = \{ \gamma \in \mathcal{P}(\mc{X}\times\mc{Y}): \pi^{\sharp}_1 \gamma = \mu,\  D_{\varphi}(\pi^{\sharp}_2 \gamma, \nu_0)\le \eta \}$, $\Gamma_\mu := \{ \gamma \in \mathcal{P}(\mathcal{X} \times \mathcal{Y}) : \pi_1^{\sharp} \gamma = \mu \}$ are both convex sets by their definition.

By Assumption~\ref{Asp_Subdiff_of_sup}, $\Gamma_{\mu}$ is weakly compact, and for $\gamma_0$ s.t. $\pi^{\sharp}_2 \gamma_0 = \nu_0$, the slater condition holds, interchanging the order of the supremum and the infimum, we obtain
     \begin{equation*}
         \begin{aligned}
             &  \sup_{\mu \in \mc{A}_{d} } \inf_{ \gamma \in \Gamma_{\mu}} \sup_{\theta \ge 0}  \left\{ \int U(x,y)\dd \gamma(x,y) + \theta \left(\mc{R}_2(\gamma)- \eta\right)\right\}\\
             = &  \sup_{\mu \in \mc{A}_{d} }  \sup_{\theta \ge 0} \inf_{ \gamma \in \Gamma_{\mu}}  \left\{ \int U(x,y)\dd \gamma(x,y) + \theta \left(\mc{R}_2(\gamma)- \eta\right)\right\}\\
             = &   \sup_{\theta \ge 0} \sup_{\mu \in \mc{A}_{d} } \inf_{ \gamma \in \Gamma_{\mu}}  \left\{ \int U(x,y)\dd \gamma(x,y) + \theta \left(\mc{R}_2(\gamma)- \eta\right)\right\}\\
             = &  \sup_{\theta \ge 0} \{ J^{p,d}(\theta)  - \theta \eta  \}.
         \end{aligned}
     \end{equation*}
Combining the above, we have the desired dual representation:
    \begin{equation}\label{Sup_theta_ge_0}
        J^{c,d}(\eta)=   \sup_{\theta \ge 0}\{ J^{p,d}(\theta) - \theta \eta \}.
    \end{equation}

    We now return to the proof of the theorem. For a given $\eta \ge  0$,  suppose that $(\mu^*_{\eta}, \gamma^*_{\eta})$ is an optimal solution to the RDRO problem $J^{c,d}(\eta)$.  First, fix any $\mu \in \mc{A}_{d}$ and consider the inner infimum. By  \cite[Section 8.3, Theorem 1 ]{luenberger1997optimization}, there exists a scalar $\theta = \theta(\eta, \mu) \ge 0$ such that
    \begin{equation}\label{eq:luenberger_inner_dual}
      \inf_{ \gamma \in  \Gamma(\mu,\eta)}   \int U(x,y)\dd \gamma(x,y)
                        =    \inf_{ \gamma \in \Gamma_{\mu}}\left\{  \int U(x,y)\dd \gamma(x,y) + \theta \left(\mc{R}_2(\gamma)- \eta\right) \right\},
    \end{equation}
    and any minimizer $\gamma^*$ on the left-hand side is also a minimizer on the right-hand side and satisfies the complementary slackness condition  $\theta\left (\mc{R}_2(\gamma^*)- \eta\right)=0$.

    In particular, for the optimal solution $(\mu^*_{\eta}, \gamma^*_{\eta})$ to the RDRO problem $J^{c,d}(\eta)$, let $\theta := \theta(\eta, \mu^*_{\eta})$. Then we have
    \begin{equation*}
        \begin{aligned}
            J^{c,d}(\eta) =& \int U(x,y)\dd \gamma^*_{\eta}(x,y)
            =   \inf_{ \gamma \in  \Gamma(\mu^*_{\eta},\eta)}  \int U(x,y)\dd \gamma(x,y)\\
            = &   \inf_{ \gamma \in \Gamma_{\mu^*_{\eta}}} \left\{ \int U(x,y)\dd \gamma(x,y) + \theta \left(\mc{R}_2(\gamma)- \eta \right)\right\} \\
            \le & \sup_{\mu \in \mc{A}_{d} } \inf_{ \gamma \in \Gamma_{\mu}} \left\{ \int U(x,y)\dd \gamma(x,y) + \theta \left(\mc{R}_2(\gamma)- \eta \right)\right\}\\
             = &    J^{p,d}(\theta)  - \theta \eta
             \le \sup_{\theta \ge 0}\{ J^{p,d}(\theta)  - \theta \eta\} 
             =  J^{c,d}(\eta).
        \end{aligned}
    \end{equation*}
As the final and initial expressions coincide, all inequalities must be equalities. We therefore conclude that $(\mu^*_{\eta}, \gamma^*_{\eta})$ also solves the penalized RDRO problem
    \begin{equation*}
       J^{p,d}(\theta) = \sup_{\mu \in \mc{A}_{d} } \inf_{ \gamma \in \Gamma_{\mu}} \left\{  \int U(x,y)\dd \gamma(x,y) + \theta \mc{R}_2(\gamma)\right\}
    \end{equation*}
for $\theta = \theta(\eta,\mu_{\eta }^*)$.
\vskip 3pt
(2) To prove the second part of the theorem, we rely on Lemmas~\ref{Lemma_Jp_concave} and~\ref{Lemma_opt_R_is_in_subdifferential} stated earlier.

We begin by establishing the equality in \eqref{dual_equ_theta}. By Lemma~\ref{Lemma_Jp_concave}, the function $J^{p,d}(\theta)$ is concave on the domain $[0, +\infty)$. Then for any $\theta_0 \in (0, +\infty)$ and any $\eta \in \partial J^{p,d}(\theta_0)$, the supremum
\begin{equation*}
    \sup_{\theta \ge 0} \{J^{p,d}(\theta) - \theta \eta \}
\end{equation*}
is attained at $\theta = \theta_0$. Hence, it follows from \eqref{Sup_theta_ge_0} that
\begin{equation*}
    J^{c,d}(\eta) = \sup_{\theta \ge 0} \{ J^{p,d}(\theta) - \theta \eta\}  = J^{p,d}(\theta_0) - \theta_0 \eta.
\end{equation*}
As the expression \eqref{Sup_theta_ge_0} holds for all $\eta \ge 0$, combining it with the identity above yields that $\eta \in \partial J^{p,d}(\theta_0)$ solves the dual problem:
\begin{equation*}
    J^{p,d}(\theta_0) = \min_{\eta \ge 0} \{ J^{c,d}(\eta) + \theta_0 \eta\} .
\end{equation*}
Now we return to the main argument.  
    For any $\theta \in (0,+\infty)$ and $\eta \in \pt J^{p,d}(\theta)$, Lemma~\ref{Lemma_opt_R_is_in_subdifferential} ensures that there exists at least one optimizer $(\mu_{\theta}^*, \gamma_{\theta}^*)$ of $J^{p,d}(\theta)$ such that $\eta = \mc{R}_2(\gamma_{\theta}^*)$.  Then,
    \begin{equation*}
        \begin{aligned}
             J^{p,d}(\theta) -& \theta \eta 
          =\left[\int U(x,y)\dd \gamma_{\theta}^*(x,y) + \theta \mc{R}_2(\gamma_{\theta}^*) \right]- \theta\eta  \\
          =& \inf_{\gamma \in \Gamma_{\mu_{\theta}^*}} \left[ \int U(x,y)\dd \gamma(x,y) + \theta \left(\mc{R}_2(\gamma)- \eta\right) \right] \\
         \le & \inf_{\gamma \in \Gamma_{\mu_{\theta}^*}}  \sup_{\theta \ge 0} \left[  \int U(x,y)\dd \gamma(x,y) + \theta \left(\mc{R}_2(\gamma)- \eta \right) \right] \\
         = & \inf_{\substack{\gamma \in \Gamma_{\mu_{\theta}^*}\\ \mc{R}_2(\gamma)\le \eta}}   \int U(x,y)\dd \gamma(x,y)
         =  \inf_{\gamma \in \Gamma(\mu_{\theta}^*,\eta)}   \int U(x,y)\dd \gamma(x,y) \\
         \le & \sup_{\mu \in \mc{A}_{d}} \inf_{\gamma \in \Gamma(\mu,\eta)}   \int U(x,y)\dd \gamma(x,y) 
         = J^{c,d}(\eta) 
         =  J^{p,d}(\theta) - \theta \eta.       
        \end{aligned}
    \end{equation*}
As both sides are equal, all inequalities above  become equalities. It then follows that
    \begin{equation*}
        \begin{aligned}
        J^{c,d}(\eta) =& \sup_{\mu \in \mc{A}_{d}} \inf_{\gamma \in \Gamma(\mu,\eta)}  \int U(x,y)\dd \gamma(x,y)
          = \inf_{\gamma \in \Gamma(\mu_{\theta}^*,\eta)}  \int U(x,y)\dd \gamma(x,y) \\
          = & \int U(x,y)\dd \gamma_{\theta}^*(x,y) + \theta \mc{R}_2(\gamma_{\theta}^*)- \theta\eta
          =  \int U(x,y)\dd \gamma_{\theta}^*(x,y).
        \end{aligned}
    \end{equation*}
    Thus, the pair $(\mu_{\theta}^*, \gamma_{\theta}^*)$ also solves the RDRO problem $J^{c,d}(\eta)$~\eqref{Primalprob_cons_distribution}.
\end{proof}

\begin{remark}\label{rmk:duality_without_concavity}
    If Assumption~\ref{Asp_utility_concave}, namely the concavity of the utility function, is removed, the concavity of $J^{p,d}(\theta)$ on the entire domain $[0,+\infty)$ can no longer be established via Lemma~\ref{Lemma_Jp_concave}. 
    However, the first part of the above proof still yields concavity of $J^{p,d}(\theta)$ on a restricted domain (see Appendix~\ref{app:dual_without_concavity}). 

    As a consequence, Lemma~\ref{Lemma_opt_R_is_in_subdifferential} and the duality result in Theorem~\ref{THM_eta_theta_duality} remain valid for $\theta$ in the restricted domain 
    \begin{equation*}
        \{\theta = \theta(\eta, \mu^*_{\eta}) : \eta \ge 0\},
    \end{equation*}
    where the mapping $\theta(\eta, \mu)$ is defined in the first part of the above proof.
\end{remark}

\subsection{Summary and implications}
In this section, we have established the duality result stated in Theorem~\ref{THM_eta_theta_duality}. This theorem provides a valuable framework for understanding the relationship between the parameter $\eta$ in the constrained RDRO problem $J^{c,d}(\eta)$ and the parameter $\theta$ in the penalized RDRO problem $J^{p,d}(\theta)$, highlighting their connection through a shared optimal solution. The result has key implications, notably for ensuring the uniqueness of the optimal decision and informing the design of numerical algorithms.

First, Equations~\eqref{dual_equ_eta} and~\eqref{dual_equ_theta} imply that $J^{c,d}(\eta)$ is decreasing and convex in $\eta$, while $J^{p,d}(\theta)$ is increasing and concave in $\theta$. These analytical properties are further supported by the numerical experiments presented in Section~\ref{Sec_numerical}.

Second, in Section~\ref{Sec_uniqueness_and_innerexistence}, we use the dual relation to obtain the existence of an optimizer for the constrained RDRO problem by establishing the existence of an optimizer for the penalized problem. This approach is particularly advantageous, as the penalized problem is typically more tractable from both theoretical and numerical perspectives.

Finally, the parameter correspondence provided by the duality theorem significantly enhances the implementation of numerical methods. In particular, it enables the constrained RDRO problem to be efficiently solved via its penalized counterpart, which often exhibits superior convergence properties and is more amenable to algorithmic optimization.

\begin{remark}
The dual relationship between the parameters $\eta$ and $\theta$ is established primarily at the level of an existence result. More precisely, for any given $\theta \in (0,\infty)$, one can associate a corresponding ambiguity radius $\eta = \mc{R}_2(\gamma_{\theta}^*)$,
where $\gamma_{\theta}^*$ denotes an inner optimizer of the penalized RDRO problem $J^{p,d}(\theta)$.

In contrast, for a prescribed $\eta \ge 0$, the corresponding dual parameter $\theta$ generally does not admit an explicit representation.

This phenomenon is consistent with classical results in convex duality, particularly those related to Legendre-Fenchel duality; see, e.g., \cite{rockafellar2015convex}, \cite{luenberger1997optimization}, and \cite{hiriart2004fundamentals}. In such frameworks, the correspondence between primal and dual parameters is typically characterized via subdifferential relations of value functions. While the mapping $\theta \mapsto \eta$ can often be constructed through optimal solutions of the penalized problem, the inverse mapping is generally only implicitly determined through optimality conditions (e.g., subgradient inclusions), and thus lacks a closed-form expression.
\end{remark}

\vskip 10pt
\section{\bf Uniqueness of the optimal decision} \label{Sec_uniqueness_and_innerexistence}

Building on the duality result under the concavity assumption, we now show that a mild strengthening of this assumption guarantees the uniqueness of the optimal decision. 

Although the existence of an optimal environment variable $Y^*$ is not essential for solving the decision problem itself, it plays a crucial role in establishing the uniqueness of the optimal decision. 
Accordingly, we first present a corresponding existence result for the optimal environment variable as a preparatory step.


\subsection{Existence of the optimizer}
Before proving the uniqueness of the optimal decision, we first establish the existence of an optimal environment variable, which is instrumental for the subsequent uniqueness result.

\begin{theorem}\label{Thm_existence_inner}
Suppose that the assumptions in Theorem~\ref{THM_eta_theta_duality} hold. Then the following statements hold: 
\begin{enumerate}
    \item For any $\theta \ge 0$, the penalized RDRO problem $J^{p,d}(\theta)$~\eqref{Primalprob_penalty_distribution_formal} admits at least one optimizer $(\mu_{\theta}^*, \gamma_{\theta}^*)$. 
    \item For any $\eta \in \bigcup_{\theta \in (0,+\infty)} \pt J^{p,d}(\theta)$, the distributional RDRO problem $J^{c,d}(\eta)$ \eqref{Primalprob_cons_distribution_f1} admits at least one optimizer $(\mu_{\eta}^*, \gamma_{\eta}^*)$. Subsequently, the original RDRO problem $J^{c}(\eta)$~\eqref{Primalprob_cons}  also admits at least one optimizer $(X_{\eta}^*, Y_{\eta}^*)$.
\end{enumerate}
\end{theorem}
\begin{proof}
    Following essentially the same arguments as  Theorem~\ref{THM_existence_OTapproach}, by extracting a maximizing sequence from the outer supremum in the penalized RDRO problem $J^{p,d}(\theta)$ and applying Theorem 5.20 in \cite{villani2008optimal}, we conclude that for any $\theta \ge 0$,  there exists at least one outer optimizer $\mu_{\theta}^* \in \mc{A}_{d}$.  
    
For the inner infimum
    \begin{equation*}
        \begin{aligned}
            J^{p,d}_{in}(\theta,\mu) & = \inf_{\gamma \in \Gamma_{\mu}}   \left\{\int U(x,y)\dd \gamma(x,y)  + \theta \mc{R}_2(\gamma)\right\} \\
            & =  \inf_{\gamma \in \mathcal{M}^+(\mc{X}\times\mc{Y})}   \left\{\int U(x,y)\dd \gamma(x,y)  + D_{\iota_{\{1\}}}(\pi^{\sharp}_1 \gamma, \mu)+ D_{\theta \varphi}(\pi^{\sharp}_2 \gamma, \nu_0) \right\}
        \end{aligned}
    \end{equation*}
    of $J^{p,d}(\theta)$, which can be interpreted as an optimal entropy-transport problem, Theorem 3.3 in \cite{liero2018optimal} guarantees the existence of at least one optimizer  $\gamma_{\theta}^*$ corresponding to $\mu = \mu_{\theta}^*$. 
    
    Hence, for any $\theta \ge 0$, the penalized RDRO problem $J^{p,d}(\theta)$ admits at least one optimizer.  
    
    Furthermore, for any $\eta \in \bigcup_{\theta \in (0,+\infty)} \pt J^{p,d}(\theta)$, the existence of an optimizer $(\mu_{\eta}^*, \gamma_{\eta}^*)$ for the distributional RDRO problem $J^{c,d}(\eta)$ follows directly from Theorem~\ref{THM_eta_theta_duality}. By applying the disintegration theorem along with the definition of the distributional decision set $\mc{A}_{d}$, we can construct a corresponding random vector $(X_{\eta}^*, Y_{\eta}^*)$ that serves as an optimizer for the original RDRO problem $J^{c}(\eta)$. 
\end{proof}

\subsection{Uniqueness of the optimal decision}\label{subsec_uniqueness}

With the existence of an optimal environment variable established in the previous subsection, we now turn to the uniqueness of the optimal decision. 
Under the strengthened assumption that the utility function $U(x, y)$ is strictly concave in the decision variable $x \in \mc{X}$ (see Assumption~\ref{Asp_Utility_strict_concave}), we can guarantee that the optimal decision $X^*$ is unique.


\begin{assumption}[Strict Concavity]\label{Asp_Utility_strict_concave}
The utility function $U(x, y)$ is strictly concave to $x \in \mc{X}$ for each fixed $y \in \mc{Y}$.
\end{assumption}

When the utility function is strictly concave in $x \in \mc{X}$, based on the existence of the inner optimizer, we can derive the uniqueness of the optimal decision $X^*$.

\begin{theorem}[Uniqueness of the optimal decision]\label{THM_unique}
Suppose that the assumptions of Theorem~\ref{THM_eta_theta_duality} hold, except that Assumption~\ref{Asp_utility_concave} is replaced by the stronger strict concavity condition in Assumption~\ref{Asp_Utility_strict_concave}.  
 Then, for any $\eta \in \bigcup_{\theta \in (0,+\infty)} \pt J^{p,d}(\theta)$, the RDRO problem~\eqref{Primalprob_cons} admits at most one optimal decision $X \in \mc{A}$.
\end{theorem}

\begin{proof}
Assume, for contradiction, that there exist two distinct optimal decisions $X_1, X_2 \in \mc{A}$ solving the RDRO problem~\eqref{Primalprob_cons}.

We first claim that for any optimal decision $X^*$ of the RDRO problem~\eqref{Primalprob_cons}, there exists a corresponding inner optimizer $Y^*$.

To see this, consider the reduced problem with the decision set $\mc{A}$ restricted to the singleton $\tilde{\mc{A}} := \{X^*\}$.  As all assumptions of Theorem~\ref{Thm_existence_inner} still hold on this restricted set, there exists at least one  optimizer $Y^*$ such that the pair $(X^*, Y^*)$ solves
\begin{equation*}
    \tilde{J}^{c}(\eta) := \sup_{X \in \tilde{\mc{A}}} \inf_{Y \sim \nu \in D_{\eta}} \E \left[ U(X,Y) \right] = \inf_{Y \sim \nu \in D_{\eta}} \E \left[ U(X^*, Y) \right],
\end{equation*}
which establishes the existence of an inner optimizer $Y^*$ associated with any optimal decision $X^*$.

Applying this to $X_1$ and $X_2$, we obtain corresponding inner optimizers $Y_1$ and $Y_2$ satisfying
\begin{equation*}
\begin{aligned}
\mathbb{E}\left[ U(X_1, Y_1) \right] &= \inf_{Y \sim \nu \in D_{\eta}} \mathbb{E}\left[ U(X_1, Y) \right] = \sup_{X \in \mc{A}} \inf_{Y \sim \nu \in D_{\eta}} \mathbb{E}\left[ U(X, Y) \right], \\
\mathbb{E}\left[ U(X_2, Y_2) \right] &= \inf_{Y \sim \nu \in D_{\eta}} \mathbb{E}\left[ U(X_2, Y) \right] = \sup_{X \in \mc{A}} \inf_{Y \sim \nu \in D_{\eta}} \mathbb{E}\left[ U(X, Y) \right].
\end{aligned}
\end{equation*}
Because  $\mc{A}$ is convex, for any $\lambda \in (0,1)$, the convex combination $X_0 := \lambda X_1 + (1 - \lambda) X_2\in\mc{A}$. Let $Y_0$ be an inner optimizer  corresponding to $X_0$, i.e.,
\begin{equation*}
\mathbb{E}\left[ U(X_0, Y_0) \right] = \inf_{Y \sim \nu \in D_{\eta}} \mathbb{E}\left[ U(X_0, Y) \right].
\end{equation*}
By Assumption~\ref{Asp_Utility_strict_concave}, the function $U(x, y)$ is strictly concave in $x$. Therefore,
\begin{equation}
    \begin{aligned}
      \inf_{Y \sim \nu \in D_{\eta}} \mathbb{E}&\left[ U(X_0,Y) \right]
      =  \mathbb{E}\left[ U(X_0,Y_0) \right] 
      =  \mathbb{E}\left[ U(\lambda X_1 +(1-\lambda)X_2,Y_0) \right]\\
      > & \lambda \mathbb{E}\left[ U(X_1,Y_0)\right] + (1-\lambda) \mathbb{E}\left[ U(X_2,Y_0)\right]\\
      \ge & \lambda \inf_{Y \sim \nu \in D_{\eta}} \mathbb{E}\left[ U(X_1,Y)\right] + (1-\lambda) \inf_{Y \sim \nu \in D_{\eta}} \mathbb{E}\left[ U(X_2,Y)\right]\\
      = & \lambda \sup_{X \in \mc{A}} \inf_{Y \sim \nu \in D_{\eta}} \mathbb{E}\left[ U(X,Y) \right]+ (1-\lambda) \sup_{X \in \mc{A}} \inf_{Y \sim \nu \in D_{\eta}} \mathbb{E}\left[ U(X,Y)\right] \\
      = & \sup_{X \in \mc{A}} \inf_{Y \sim \nu \in D_{\eta}} \mathbb{E}\left[ U(X,Y)\right].
    \end{aligned}
\end{equation}
This strict inequality contradicts the optimality of $X_1$ and $X_2$.  Thus the optimal decision of the RDRO problem~\eqref{Primalprob_cons} must be unique.

\end{proof}
\begin{remark}
    The same uniqueness argument applies to the penalized RDRO problem~\eqref{Primalprob_penalty_formal} with minor modifications.  Specifically, it suffices to replace $\mathbb{E}\left[ U(X, Y) \right]$ by $\mathbb{E}\left[ U(X, Y) \right] + \theta D_{\varphi}(\nu, \nu_0)$ and relax the constraint $\inf\limits_{Y \sim \nu \in D_{\eta}}$ to $\inf\limits_{Y \sim \nu \in \mathcal{P}(\mc{Y})}$. The rest of the proof remains valid. Therefore, strict concavity of 
 $U(x, y)$ in $x$ also ensures uniqueness of the outer optimizer for the penalized problem.

\end{remark}


\vskip 15pt
\section{\bf Numerical results}\label{Sec_numerical}

This section presents a numerical study of the RDRO problem. The primary objective is to approximate the implicit relationship between key parameters and to illustrate a feasible computational approach. 

To this end, we work with the penalized formulation, which facilitates numerical implementation. Based on this formulation, we develop a two-step iterative procedure to approximate the optimizer, and recover a numerical solution to the original problem~\eqref{Primalprob_cons} with the numerical result that illustrates relationship between key parameters.

\subsection{The penalized RDRO problem under discrete measures}

We begin by discretizing the penalized RDRO problem in~\eqref{Primalprob_penalty_distribution_formal}. Assume that the sample space $\Omega$ is partitioned into $n$ disjoint events $\Omega = \bigcup_{i=1}^n \Omega_i$ based on the information $\mathcal{F}$ generated by the decision variable $X$. Let $\mathbb{P}(\Omega_i) = p_i$, and define the corresponding probability vector $\mbf{p}^\top = (p_1, p_2, \ldots, p_n)$. Let $\mbf{x}^\top = (x_1, x_2, \ldots, x_n)$ denote the values of $X$ over these partitions, so that the constraint $X \in \mc{A}$ becomes $\mbf{x} \in \mc{A}$ in the discretized setting.

Similarly, under the information $\tilde{\mathcal{F}}$ related to the environmental variable $Y$, suppose $\Omega$ is partitioned into $r$ disjoint events $\Omega = \bigcup_{j=1}^r \tilde{\Omega}_j$. Let $\bm{\nu}_0^\top = (\nu_1^0, \nu_2^0, \ldots, \nu_r^0)$ and $\bm{\nu}^\top = (\nu_1, \nu_2, \ldots, \nu_r)$ denote the nominal and true distributions of $Y$, respectively. Let $\mbf{y}^\top := (y_1, y_2, \ldots, y_r)$ be the corresponding realizations of $Y$ on these partitions.

\begin{definition}
Under this discretization, the penalized RDRO problem is formulated as follows:
\begin{equation}\label{Discreteprob_num}
 D(\theta;\mbf{p},\bm{\nu}_0)  = \sup_{\mbf{x} \in \mc{A}} 
\inf_{\bm{\gamma} \in \mR^{n\times r}_{+}}   f(\mbf{x},\bm{\gamma};\theta,\mbf{p},\bm{\nu}_0) ,
\end{equation}
where the objective function is given by
\begin{equation}
   f(\mbf{x},\bm{\gamma};\theta,\mbf{p},\bm{\nu}_0) = \sum_{i=1}^n \sum_{j=1}^r \gamma_{i,j} U(x_i, y_j)+  D_{F_1}(\pi_1^{\#}\bm{\gamma}, \bm{p})  + \theta D_{\varphi}(\pi_2^{\#}\bm{\gamma}, \bm{\nu}_0), 
\end{equation} 
and the indicator-type divergence term is defined by
\begin{equation}
   D_{F_1}(\pi_1^{\#}\bm{\gamma}, \bm{p}) = \begin{cases}
           0 , \text{if} \  \  \pi_1^{\#}\bm{\gamma}=\mbf{p},\\
           + \infty, \text{otherwise}.
       \end{cases}
\end{equation}
The value of the inner infimum is denoted by\begin{equation}\label{NUM_inner_inf}
   f(\mbf{x} ;\theta,\mbf{p},\bm{\nu}_0) = \inf_{\bm{\gamma} \in \mR^{n\times r}_{+}}   f(\mbf{x},\bm{\gamma};\theta,\mbf{p},\bm{\nu}_0).
\end{equation}
\end{definition}
For simplicity, we assume that the utility function $U(\mbf{x}, \mbf{y})$, the objective function $f(\mbf{x}, \bm{\gamma}; \theta, \mbf{p}, \bm{\nu}_0)$, and the inner infimum function $f(\mbf{x}; \theta, \mbf{p}, \bm{\nu}_0)$ are all continuously differentiable in $\mbf{x}$. In cases where this smoothness condition does not hold, the projected gradient method introduced in Algorithm~\ref{Algo_IPGMSA} can be replaced by a projected subgradient method; see, e.g., \cite{nedic2001incremental}.


\subsection{Two-step algorithm}
To numerically solve the discretized penalized RDRO problem~\eqref{Discreteprob_num}, we propose a two-step iterative algorithm. This approach separates the problem into two components: a scaling algorithm for the inner entropy transport problem and a projected gradient method (PGM) for the outer optimization.

\vspace{1em}
\noindent\textbf{Step 1: Scaling algorithm for the inner entropy-regularized problem.}
We begin by recalling the scaling algorithm from \cite[Algorithm 1]{chizat2018scaling}, adapted to our setting. For clarity, we restate it below.

\begin{algorithm}[htbp]
\caption{Scaling Algorithm  (Sinkhorn-Type Iteration)}\label{Algo_SA}
\begin{algorithmic}
\STATE \textbf{Input:} the operators $\operatorname{proxdiv}_{F_1}$, $\operatorname{proxdiv}_{F_2}$; cost matrix $C := U(x_i, y_j)$; regularization $\epsilon$; probability vectors $\mbf{p}$, $\bm{\nu}_0$
\STATE Initialize $K = (K_{ij}) \leftarrow \exp(-C_{ij} / \epsilon)$, $\mbf{b} \leftarrow \mathbf{1}_r$
\REPEAT
  \STATE $\mbf{a} \leftarrow \operatorname{proxdiv}_{F_1}(K( \mbf{b} \odot \bm{\nu}_0 ),\epsilon)$
  \STATE $\mbf{b} \leftarrow \operatorname{proxdiv}_{F_2}(K^{\top}(\mbf{a} \odot \mbf{p} ),\epsilon)$
\UNTIL{stopping criterion is met, or a maximum number l of iterations is reached}
\STATE Compute $\bm{\gamma} = (\gamma_{ij}) \leftarrow (a_i K_{ij} b_j)$
\STATE \textbf{Output:} Transport plan $\bm{\gamma}$
\end{algorithmic}
\end{algorithm}



Here, $\odot$ denotes elementwise multiplication and $\mathbf{1}_r$ is the all-ones vector of length $r$. The operators $\operatorname{proxdiv}_{F_1}$ and $\operatorname{proxdiv}_{F_2}$ are derived from the chosen divergence function. For example, when $D_\varphi$ is the Kullback–Leibler (KL) divergence, these take the form: $$\operatorname{proxdiv_{F_1}}(\mbf{s},\epsilon) = \mbf{p} \oslash \mbf{s}=\left( \frac{p_i}{s_i} \right)_{i=1,...,n}, \operatorname{proxdiv_{F_2}}(\mbf{s},\epsilon) =\left(  \left( \frac{\nu_j^0}{s_j} \right)^{\frac{\lambda}{\lambda + \epsilon}}e^{\frac{-\epsilon}{\lambda + \epsilon}} \right)_{j=1,...,r}.$$

According to \cite{chizat2018scaling}, the scaling algorithm converges to the solution of a regularized version of the inner minimization problem~\eqref{NUM_inner_inf} with regularization parameter $\epsilon > 0$:
\begin{equation}\label{NUM_inner_inf_regularized}
    f(\mbf{x}; \theta, \mbf{p}, \bm{\nu}_0, \epsilon) := \inf_{\bm{\gamma} \in \mathbb{R}_+^{n \times r}} f(\mbf{x}, \bm{\gamma}; \theta, \mbf{p}, \bm{\nu}_0, \epsilon) = \inf_{\bm{\gamma} \in \mathbb{R}_+^{n \times r}} \{f(\mbf{x}, \bm{\gamma}; \theta, \mbf{p}, \bm{\nu}_0) + \epsilon \mathcal{H}(\bm{\gamma})\},
\end{equation}
where $\mathcal{H}(\bm{\gamma})$ denotes an entropy term.

The convergence of the scaling algorithm is summarized as follows.
\!\!\! \!\!\!\!\!\!
\begin{theorem}\label{Thm_convergence}
(1) As $\epsilon \to 0$, the regularized value $f(\mbf{x}; \theta, \mbf{p}, \bm{\nu}_0, \epsilon)$ converges to the unregularized value  $f(\mbf{x}; \theta, \mbf{p}, \bm{\nu}_0)$.

(2) Algorithm~\ref{Algo_SA} converges to the unregularized value  $\bm{\gamma}^*_\epsilon$ of the regularized problem~\eqref{NUM_inner_inf_regularized}, with convergence rate $\operatorname{KL}(\bm{\gamma}^{(l)}_{\epsilon},\bm{\gamma}^{*}_{\epsilon}) \le C_{\epsilon}/l$, where $\bm{\gamma}^{(l)}_\epsilon$ denotes the output after $l$ iterations, $\operatorname{KL}(\cdot, \cdot)$ is the Kullback–Leibler divergence, and $C_\epsilon$ is a constant independent of $\mbf{x}$.
\end{theorem}
\begin{proof}
    See \cite[Theorem 4.1]{chizat2018scaling}. 
\end{proof}

\vspace{1em}
\noindent\textbf{Step 2: PGM for the outer optimization.}

For the outer supremum, by \cite[Theorem 4.4.2]{hiriart2004fundamentals}, the subdifferential of $f$ is given by
\[
\partial_{\mbf{x}} f(\mbf{x} ;\theta,\mbf{p},\bm{\nu}_0) = \operatorname{conv} \left\{ \bigcup\limits_{\bm{\gamma} \text{ solves } f(\mbf{x} ;\theta,\mbf{p},\bm{\nu}_0)} \partial_{\mbf{x}} f(\mbf{x}, \bm{\gamma} ;\theta,\mbf{p},\bm{\nu}_0) \right\}.
\]
Under the assumption that the functions are continuously differentiable with respect to $\mbf{x}$, the subdifferential reduces to the gradient
\[
\nabla_{\mbf{x}} f(\mbf{x} ;\theta,\mbf{p},\bm{\nu}_0) = \nabla_{\mbf{x}} f(\mbf{x}, \bm{\gamma}^* ;\theta,\mbf{p},\bm{\nu}_0),
\]
for any optimizer $\bm{\gamma}^*$ of the inner problem~\eqref{NUM_inner_inf}. 


Then, we propose the two-step algorithm as follows.

\begin{algorithm}[htbp]
\caption{IPGM-SA (Inexact Projected Gradient Method + Scaling Algorithm)} \label{Algo_IPGMSA}
\begin{algorithmic}
\STATE \textbf{Input:} initial point $\mbf{x}_0$, vectors $\mbf{y}$, $\mbf{p}$, $\bm{\nu}_0$, step size $\alpha$, utility function $U$, partial derivative $U_x$
\STATE Initialize $\mbf{x} \gets \mbf{x}_0$
\REPEAT
    \STATE Compute cost matrix $C \gets U(\mbf{x}, \mbf{y})$
    \STATE Compute transport plan $\bm{\gamma} \gets \text{ScalingAlgo}(\mbf{p}, \bm{\nu}_0, C, \epsilon)$
    \STATE Compute gradient: $\nabla f_{\bm{\gamma}}(\mbf{x}) \gets \left( \sum_{j=1}^r \gamma_{ij} U_x(x_i, y_j) \right)_{1 \le i \le n}$
    \STATE Update: $\mbf{x} \gets \mc{P}_{\mc{A}}(\mbf{x} - \alpha \nabla f_{\bm{\gamma}}(\mbf{x}))$
\UNTIL{stopping criterion is met}
\STATE \textbf{Output:} optimal decision $\mbf{x}$, transport plan $\bm{\gamma}$
\end{algorithmic}
\end{algorithm}

Here, $\mc{P}_{\mc{A}}$ denotes the projection onto the closed convex set $\mc{A}$, and the gradient $\nabla f_{\bm{\gamma}}(\mbf{x}) := \nabla_{\mbf{x}} f(\mbf{x}, \bm{\gamma}; \theta, \mbf{p}, \bm{\nu}_0, \epsilon)$ is evaluated at the current transport plan $\bm{\gamma}$.


We next establish the convergence of Algorithm~\ref{Algo_IPGMSA} to an approximate optimizer of the discretized RDRO problem.

\begin{theorem} 
    Let ${\mbf{x}_k}$ be the sequence generated by Algorithm~\ref{Algo_IPGMSA}, and let $\mbf{x}^*$ be the optimal solution to Problem~\eqref{Discreteprob_num}. Then, for any error tolerance $\delta > 0$, there exist regularization parameter $\epsilon > 0$ and inner iteration number $l \in \mathbb{N}$ such that: 
    \begin{equation} \label{num_converge}
        \| \mbf{x}_k - \mbf{x}^* \| \le (1 - r)^k + C_0 \delta
    \end{equation} 
    for some constants $r \in (0,1)$ and $C_0 > 0$.
\end{theorem}

\begin{proof}
Let $\bm{\gamma}_k^*$ and $\bm{\gamma}_{k,\epsilon}^*$ denote the optimal solutions to Problems~\eqref{NUM_inner_inf} and~\eqref{NUM_inner_inf_regularized} with $\mbf{x} = \mbf{x}_k$, respectively. Let $\bm{\gamma}_{k,\epsilon}^{(l)}$ be the output of Algorithm~\ref{Algo_SA} after $l$ iterations, for regularization parameter $\epsilon$ and input $\mbf{x}_k$.

By the convergence result in Theorem~\ref{Thm_convergence} and Pinsker's inequality, for any $\delta > 0$, there exist $\epsilon > 0$ and $l \in \mathbb{N}$ such that
\(
\| \bm{\gamma}_{k,\epsilon}^{(l)} - \bm{\gamma}_{k,\epsilon}^* \|_1 + \| \bm{\gamma}_{k,\epsilon}^* - \bm{\gamma}_k^* \|_1 \le \delta 
\)
uniformly for all $\mbf{x}_k \in \mc{A}$.

Let $\nabla f_{k,\bm{\gamma}}$ denote the gradient computed using $\bm{\gamma}_{k,\epsilon}^{(l)}$, and let $\nabla f := \nabla_{\mbf{x}} f(\mbf{x}; \theta, \mbf{p}, \bm{\nu}_0)$ be the exact gradient. Define $\mbf{z}_k := \mbf{x}_k - \alpha \nabla f(\mbf{x}_k)$. Then
\begin{align*}
   & \| \mbf{x}_{k+1} - \mc{P}_{\mc{A}}(\mbf{z}_k) \|_1 
    = \left\| \mc{P}_{\mc{A}}(\mbf{x}_k - \alpha \nabla f_{k,\bm{\gamma}}(\mbf{x}_k)) - \mc{P}_{\mc{A}}(\mbf{x}_k - \alpha \nabla f(\mbf{x}_k)) \right\|_1 \\
    \le & \left\| \alpha \left( \nabla f_{k,\bm{\gamma}}(\mbf{x}_k) - \nabla f(\mbf{x}_k) \right) \right\|_1 
    = \alpha \sum_{i=1}^n \left| \sum_{j=1}^r \left( \gamma_{k,\epsilon,ij}^{(l)} - \gamma_{k,ij}^* \right) U_x(x_{k,i}, y_j) \right| \\
    \le & \alpha C_U \| \bm{\gamma}_{k,\epsilon}^{(l)} - \bm{\gamma}_k^* \|_1 \le \alpha C_U \delta,
\end{align*}
where $C_U > 0$ is a constant depending only on the partial derivative $U_x$. The result then follows from the convergence theory of inexact projected gradient methods; see, e.g.,  \cite{patrascu2018convergence}.
\end{proof}

\subsection{Numerical examples}
In this subsection, we present a numerical experiment to illustrate the performance of the proposed method based on the robust utility maximization problem introduced in Section~\ref{subsec:DRO_invest}.

We consider the discrete penalized RDRO problem~\eqref{Discreteprob_num} under the following settings. The decision variable $X$ is discretized into $n = 50$ outcomes, and the environmental variable $Y$ takes $r = 2$ values with payoffs $(y_1, y_2) = (0, 1)$. Both the nominal distribution $\mbf{p} \in \mathbb{R}^n$ and the reference distribution $\bm{\nu}_0 \in \mathbb{R}^r$ are assumed to be uniform. The ambiguity parameter is set to $\theta = 1$, and the $\varphi$-divergence is specified as the KL divergence. The utility function is chosen to be CARA utility with risk aversion parameter $\alpha = 0.5$.

The decision set is defined as 
\[
\mc{A} = \left\{ \mbf{x} \in \mathbb{R}^n : x_i \ge 0 \text{ for all } i, \; \mathbb{E}[\mbf{x}^\top \mbf{m}] \le x_0 \right\},
\]
where $x_0 = 1$, and $\mbf{m}$ represents the pricing kernel. The vector $\mbf{m}$ is generated via importance sampling using exponential weights derived from a Girsanov transformation. The numerical results are summarized in Figures~\ref{fig:relation_eta_theta} and~\ref{fig:solution}.

\begin{figure}[htbp]
  \centering
  \begin{subfigure}[t]{0.30\textwidth}
    \includegraphics[width=\textwidth]{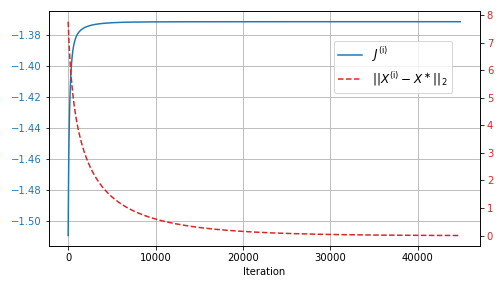}
    \caption{Convergence behavior of Algorithm~\ref{Algo_IPGMSA}}
    \label{fig:convergence}
  \end{subfigure}
  \hfill
  \begin{subfigure}[t]{0.26\textwidth}
    \includegraphics[width=\textwidth]{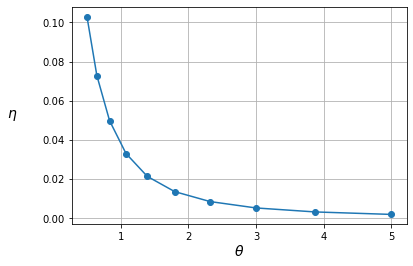}
    \caption{$\eta$ vs. $\theta$}
    \label{fig:eta_theta}
  \end{subfigure}
  \hfill
  \begin{subfigure}[t]{0.27\textwidth}
    \includegraphics[width=\textwidth]{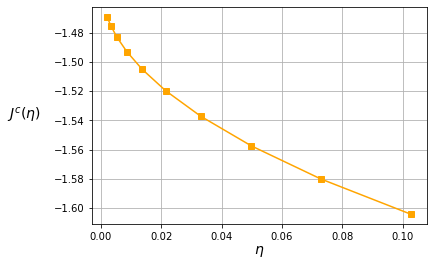}
    \caption{$J^c(\eta)$ vs. $\eta$}
    \label{fig:eta_value_c}
  \end{subfigure}
  \caption{Convergence behavior and parameter relationships.}
  \label{fig:relation_eta_theta}
\end{figure}

Figure~\ref{fig:convergence} illustrates the convergence behavior of Algorithm~\ref{Algo_IPGMSA}, evaluated using the objective value $J^{(i)} := f(\mbf{x}^{(i)}, \bm{\gamma}^{(i)}; \theta, \mbf{p}, \bm{\nu}_0)$ and the residual norm $\nm{\mbf{x}^{(i)}-\mbf{x}^{*}}$, where $(\mbf{x}^{(i)}, \bm{\gamma}^{(i)})$ denote the iterates at step $i$, and $\mbf{x}^*$ is the final numerical solution. The plots confirm the convergence of both the decision variable and the value function.

Figure~\ref{fig:eta_theta} illustrates the correspondence between the parameters $\eta$ and $\theta$ in the RDRO and penalized RDRO problems, respectively. According to Theorem~\ref{THM_eta_theta_duality}, they satisfy the relation $\eta = \mc{R}_2(\bm{\gamma}_{\theta}^*)$. As the penalty parameter $\theta$ increases, the penalized formulation enforces a stronger penalty on the divergence term, corresponding to a smaller tolerance $\eta$ in the original constraint-based formulation. This correspondence, which is only implicitly established in the theoretical analysis, provides a principled way to select $\theta$ from a given $\eta$, thereby complementing the theoretical results and guiding numerical implementation.

Figure~\ref{fig:eta_value_c} shows the dependence of the optimal value function $J^c(\eta)$ on the ambiguity tolerance $\eta$. As $\eta$ increases, representing greater allowable model misspecification, the decision-maker adopts a more conservative strategy, resulting in a lower expected utility. Moreover, the rate of decline in $J^c(\eta)$ diminishes with increasing $\eta$, suggesting a decreasing marginal impact of further enlarging the ambiguity set.

\begin{figure}[htbp]
  \centering
  \begin{subfigure}[t]{0.45\textwidth}
    \includegraphics[width=\textwidth]{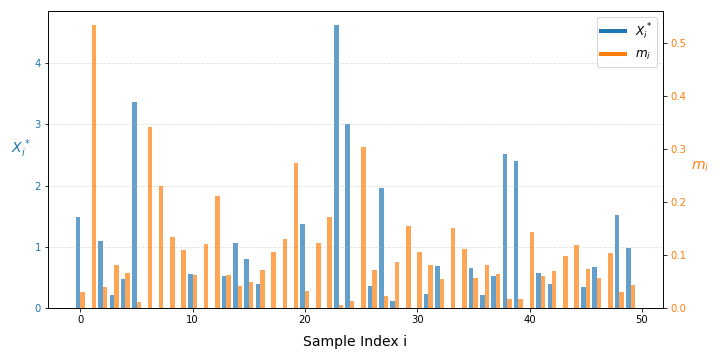}
    \caption{Optimal decision $\mbf{x}^*$ corresponding to pricing kernel $\mbf{m}$}
    \label{fig:X_opt}
  \end{subfigure}
  \hfill
  \begin{subfigure}[t]{0.36\textwidth}
    \includegraphics[width=\textwidth]{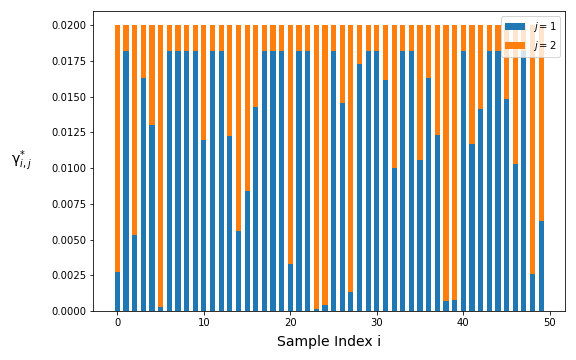}
    \caption{Optimal transport plan $\bm{\gamma}^*$}
    \label{fig:gamma_opt}
  \end{subfigure}
  \caption{Visualization of the numerical solution.}
  \label{fig:solution}
\end{figure}


Figure~\ref{fig:solution} presents the optimal decision vector $\mbf{x}^*$ and the corresponding optimal transport plan $\bm{\gamma}^*$, where the horizontal axis represents the sample index.  In Figure~\ref{fig:X_opt}, we plot the optimal allocation $\mbf{x}^*$ alongside the pricing kernel $\mbf{m}$. The optimal decision exhibits an inverse relation to the pricing kernel: $\mbf{x}_i^*$ tends to be larger when $\mbf{m}_i$ is small, and close to zero when $\mbf{m}_i$ is large. This pattern reflects rational asset allocation under the constraint $\mathbb{E}[\mbf{x}^\top \mbf{m}] \le x_0$, where higher investment is allocated to states with lower marginal cost.

In the worst-case scenario characterized by the optimal transport plan $\bm{\gamma}^*$ from the inner infimum, probability mass is increasingly assigned to the unfavorable outcome $y_1 = 0$ (i.e., $j = 1$) in regions where $\mbf{x}_i$ is large. This strategic reallocation lowers the expected utility, aligning with the adversarial structure of the robust optimization framework.

\section{Conclusion}\label{Sec_conclusion}

This paper studies a robust utility maximization problem for intractable claims under distributional ambiguity, where the distribution of the claim is not fully specified and its dependence with the decision variable is largely unknown. We extend the existing framework in two directions: first, by allowing the marginal distribution of the claim to vary within a $\varphi$-divergence ambiguity set, and second, by considering a general (possibly non-additive) utility function beyond the classical additive specification.

To analyze this problem, we adopt a random distributionally robust optimization (RDRO) formulation, which lifts the optimization to the space of joint distributions and provides a convenient representation of the coupling between the decision and the uncertain claim. By combining tools from optimal transport and convex analysis, we establish the existence and uniqueness of the optimal decision and develop a Legendre-Fenchel duality framework that connects the constrained formulation with its penalized counterpart. In addition, we propose a numerical algorithm based on unbalanced optimal transport scaling and projected gradient methods, and illustrate the relationship between the parameters of the constrained and penalized formulations.

Several extensions for future research remain worth exploring. One natural extension is to consider alternative ambiguity sets, such as those based on Wasserstein distance. Another relevant line of research is to incorporate dependence or structural constraints between the decision variable and the claim, which would lead to more refined models of interaction. These extensions may require new analytical tools, as the current approach relies on specific properties of $\varphi$-divergence and the associated optimal transport structure.



\appendix
\section{Assumption validation for the robust utility maximization problem}\label{app:verification}

This appendix verifies that the primal robust utility maximization problem~\eqref{Primalprob} satisfies all assumptions required by our theoretical framework. 


First, Assumptions~\ref{Asp_utility}, \ref{Asp_utility_concave} and \ref{Asp_Utility_strict_concave} are direct from Assumption~\ref{Asp_U1}, and  Assumption~\ref{Asp_coercive} follows from the chioce of $\varphi$-divergence. 

Verifying Assumption~\ref{Asp_mu_weak_convergent} requires a more technical argument, which we provide below.

\begin{proposition}\label{prop:App_mu_weak_convergent}
Under the market model in Section~\ref{subsec:DRO_invest}, Assumption~\ref{Asp_mu_weak_convergent} holds.
\end{proposition}
\begin{proof}
The budget constraint $\E [ \rho X ] \le x$ and Markov's inequality yield
\begin{equation}\label{tight_x}
\mP[X \ge C_1] \le \mP[\rho X \ge C_1 C_2] + \mP[\rho \le C_2]
\le \frac{x}{C_1 C_2} + \mP[\rho \le C_2]
\end{equation}
for any  constants $C_1, C_2 > 0$. For any $\epsilon > 0$, choose $C_2$ such that $\mathbb{P}[\rho \le C_2] \le \epsilon/2$, and set $C_1 = 2x/(\epsilon C_2)$ to ensure $\mathbb{P}[X \ge C_1] \le \epsilon$. Similarly, there exists $C_3 > 0$ such that $\mP[\rho \ge C_3] \le \epsilon$. Hence,
\begin{equation}
\mP\big[(X,\rho) \in [0, C_1] \times [0, C_3]\big] \ge 1 - 2\epsilon,
\end{equation}
showing tightness of the joint distribution of $(X, \rho)$. 
We denote the joint distribution of $(X, \rho)$ as $\hat{\mu}_X$ and $\mu_X$ is the corresponding distribution of $X$. 
Let $\mu_n$ be any sequence in $\mc{A}_{d}$ with joint distributions $\hat{\mu}_n$. 
By Prokhorov’s theorem, a subsequence $\hat{\mu}_{n_k}$ converges to $\hat{\mu}_0$, and its marginal $\mu_{n_k}$ converges to $\mu_0$. As $\E [ \rho X ] \le x$ for $\hat{\mu}_{n_k}$, the same holds for $\hat{\mu}_0$ by weak convergence, thus $\mu_0 \in \mc{A}_{d}$.
\end{proof}

Finally, we verifying Assumption~\ref{Asp_Subdiff_of_sup}.

It suffices to show that $\Gamma_{\mu}$ is weakly compact for any $\mu \in \mc{A}_{d}$. By \cite[Section 6]{villani2008optimal}, there are many distances that could metrize the weak topology on $\mathcal{P}(\mathcal{X} \times \mathcal{Y})$. Since the space is metrizable, compactness sequentially compact under weak convergence is equivalent to sequentially compactness under weak convergence. Then, this result follows once again from Prokhorov’s theorem.

By proposition~\ref{prop:App_mu_weak_convergent}, $\mc{A}_{d}$ is tight. Thus, for $\mu \in \mc{A}_{d}$, for any $\epsilon >0$, there exists a compact set $K_x \in \mc{X}$ s.t. $\mu(K_x) \ge 1 - \epsilon$.
Then, for any $\gamma \in \Gamma_{\mu}$, since $\mc{Y}$ is bounded, thus compact, we have
\begin{equation}
   \gamma(K_x \times \mc{Y}) =  \mu(K_x) \ge 1 - \epsilon,
\end{equation}
Thus, the family $\Gamma_{\mu}$ is uniformly tight.
Moreover, $\Gamma_{\mu}$ is closed, since the mapping $\gamma \rightarrow \pi^{\sharp}_1 \gamma$ is continuous. By Prokhorov’s theorem, we obtain the sequentially compact under weak convergence.

\section{Realization of a coupling on a the probability space}\label{app:equivalence_of_rv_and_measure}

\begin{lemma}[Realization of a coupling on a non-atomic space]
Let $(\mathcal{X},\mathcal{B}(\mathcal{X}))$ and $(\mathcal{Y},\mathcal{B}(\mathcal{Y}))$ be Polish spaces. Let $\mathcal{P}(\mathcal{X})$ and $\mathcal{P}(\mathcal{Y})$ denote the spaces of Borel probability measures on $\mathcal{X}$ and $\mathcal{Y}$ respectively, and let $\Pi(\mu,\nu) \subset \mathcal{P}(\mathcal{X}\times\mathcal{Y})$ be the set of all couplings of $\mu \in \mathcal{P}(\mathcal{X})$ and $\nu \in \mathcal{P}(\mathcal{Y})$. Let $(\Omega,\mathcal{F},\mathbb{P})$ be a non-atomic probability space and $X:\Omega \to \mathcal{X}$ be a random variable with law $\mathbb{P} \circ X^{-1} = \mu$. Then for any $\gamma \in \Pi(\mu,\nu)$, there exists a random variable $Y:\Omega \to \mathcal{Y}$ such that $(X,Y) \sim \gamma$.
\end{lemma}

\begin{proof}
Since $\mathcal{X}$ and $\mathcal{Y}$ are Polish spaces, the disintegration theorem implies that for any $\gamma \in \Pi(\mu,\nu)$, there exists a measurable stochastic kernel $K:\mathcal{X}\times \mathcal{B}(\mathcal{Y}) \to [0,1]$ such that $\gamma(\dd x, \dd y) = K(x, \dd y)\mu(\dd x)$. Because $(\Omega,\mathcal{F},\mathbb{P})$ is a non-atomic probability space, it is a standard probability space and can support a random variable $U_0:\Omega \to [0,1]$ uniformly distributed on $[0,1]$ (with respect to the Lebesgue measure $\lambda$) such that $U_0$ is independent of $X$. 

Furthermore, as $\mathcal{Y}$ is a Polish space, the kernel $K$ admits a Skorokhod representation: there exists a Borel measurable function $F:\mathcal{X} \times [0,1] \to \mathcal{Y}$ such that for each $x \in \mathcal{X}$, the push-forward of $\lambda$ under $F(x, \cdot)$ is $K(x, \cdot)$, i.e., $\lambda(\{u \in [0,1] : F(x,u) \in B\}) = K(x, B)$ for any $B \in \mathcal{B}(\mathcal{Y})$. 

Define the random variable $Y(\omega) := F(X(\omega), U(\omega))$. The measurability of $Y$ follows from the measurability of $F, X$, and $U_0$. For any bounded Borel measurable function $f:\mathcal{X}\times\mathcal{Y} \to \mathbb{R}$, by the independence of $X$ and $U_0$, we have
\begin{equation}
    \mathbb{E}[f(X,Y)] = \mathbb{E}\left[ \int_0^1 f(X, F(X,u)) \dd \lambda(u) \right].
\end{equation}
By the construction of $F$, the inner integral satisfies
\begin{equation}
    \int_0^1 f(X, F(X,u)) \dd \lambda(u) = \int_{\mathcal{Y}} f(X,y) K(X, \dd y).
\end{equation}
Substituting this into the expectation and using the fact that $X$ has law $\mu$, we obtain
\begin{equation}
    \mathbb{E}[f(X,Y)] = \int_{\mathcal{X}} \left( \int_{\mathcal{Y}} f(x,y) K(x, \dd y) \right) \dd \mu(x).
\end{equation}
Finally, by the definition of the disintegration of $\gamma$, this integral is exactly
\begin{equation}
    \mathbb{E}[f(X,Y)] = \int_{\mathcal{X}\times\mathcal{Y}} f(x,y) \dd \gamma(x,y).
\end{equation}
Thus, $(X,Y)$ is a realization of the coupling $\gamma$, which completes the proof.
\end{proof}

\section{Duality without concavity}\label{app:dual_without_concavity}

Based on the first half of the proof for Theorem~\ref{THM_eta_theta_duality}, in which we haven't used the concavity of $U$, we have
\begin{equation*}
    \begin{aligned}
        J^{c,d}(\eta) & =   \sup_{\theta \ge 0}\{ J^{p,d}(\theta) - \theta \eta \} \\
        & \ge J^{p,d}(\theta) - \theta \eta
    \end{aligned}
\end{equation*}
holds for any $\eta \ge 0$.

Thus, 
\begin{equation}\label{eq:app_jpd}
    J^{p,d}(\theta) \le J^{c,d}(\eta) + \theta \eta
\end{equation}
for any $\theta, \eta \ge 0$.

And the equation in \eqref{eq:app_jpd} holds for $\theta = \theta(\eta, \mu^*_{\eta})$ that solves \eqref{eq:luenberger_inner_dual} with $\mu = \mu^*_{\eta}$.

Thus, 
\begin{equation}
    J^{p,d}(\theta) = \inf_{\eta \ge 0} \{ J^{c,d}(\eta) + \theta \eta \}
\end{equation}
for any $\theta \in \{\theta = \theta(\eta, \mu^*_{\eta}) : \eta \ge 0  \}$.
Thus, $J^{p,d}(\theta)$ is concave on this narrowed domain.

\section*{Acknowledgments}
The authors acknowledge the support from the National Natural Science Foundation of China (Grant No.12271290, No.12371477), the MOE Project of Key Research Institute of Humanities and Social Sciences(22JJD910003). The authors also thank the members of the group of Mathematical Finance and Actuarial Sciences at the Department of Mathematical Sciences, Tsinghua University for their feedback and useful conversations. The authors used ChatGPT (GPT-4, OpenAI, used in June 2025) to assist in improving the grammar and writing style of this manuscript.

\bibliographystyle{apalike}
\bibliography{refs}
\end{document}